\documentclass[11pt, oneside]{article}
\usepackage{geometry}                		
\geometry{letterpaper}                   		

\usepackage[pagewise]{lineno}	
\usepackage[parfill]{parskip}    		
\usepackage{graphicx}				
\usepackage[utf8]{inputenc}
\usepackage[english]{babel}
\usepackage{mathrsfs,amsmath}
\usepackage{amsthm}								
\usepackage{amssymb}
\usepackage{amsmath}
\usepackage{amsfonts}
\usepackage{mathbbol}
\usepackage{biblatex}
\usepackage{xpatch}
\usepackage{subcaption}
\usepackage{hyperref}
\usepackage{csquotes}
\usepackage{lipsum}

\newtheorem{theorem}{Theorem}[section]
\makeatletter
\AtBeginDocument{\xpatchcmd{\@thm}{\thm@headpunct{.}}{\thm@headpunct{}}{}{}}
\makeatother
\newtheorem{lemma}[theorem]{Lemma}

\newtheorem{proposition}[theorem]{Proposition}

\newtheorem{corollary}[theorem]{Corollary}

\theoremstyle{definition}
\newtheorem{definition}{Definition}[section]

\theoremstyle{remark}
\newtheorem{remark}{Remark}[section]

\theoremstyle{definition}
\newtheorem{example}{Example}[section]

\theoremstyle{definition}

\date{}
\title{A standard form of incompressible surfaces in 3-dimensional handlebodies}

\author{Wei Lin, Fengchun Lei}

\begin{document}
\maketitle
\baselineskip 14pt

\setlength{\parindent}{1.5em}

\abstract{Applying Morse theory, we give a standard form for a class of surfaces which includes all the properly embedded incompressible surfaces in 3-dimensional handlebodies. We also give a necessary and sufficient condition to determine the incompressibility of such surfaces placed in our standard form. Our algorithm is practical. Several examples are given to test the algorithm.}

{\small{\bf Keywords}: incompressible surface, handlebody, Morse theory, general position

{\small{\bf Mathematics Subject Classification 2020:} 57K30, 57N75, 57R65}

\maketitle

\section{Introduction}

The question of constructions for incompressible surface in genus two handlebody $H_2$ was first studied by Jaco \cite{7}, who showed there exists  a properly embedded, nonseparating, incompressible surface with arbitrarily large genus in $H_2$. Jaco then asked whether there exists a separating incompressible surface with arbitarily high genus in $H_2$. This question was answered in the affirmative independently by Eudave-Muñoz \cite{1}, Howards \cite{3}, and  Qiu \cite{8}. Nogueria and Segerman \cite{6} showed that for every compact surface with boundary, orientable or not, there is an incompressible embedding of the surface into $H_2$.\par

Several standard positions of incompressible surfaces in the handlebodies were given by different authors. Charitos, Papadoperakis, Tsapogas \cite{4} gave a canonical gluing method where the pieces are disks, so that every properly embedded incompressible surface in a handlebody can be constructed with. They also gave a simple sufficient condition which asserts that the result of the gluing process is incompressible. Let $F$ be a compact surface and let $I$ be the unit interval. Green \cite{2} gave a standard position for all 2-sided incompressible surfaces in the 3-manifold $F\times I$. It also supplies a simple sufficient condition for when 2-sided surfaces in this form are incompressible. And, since $F\times I$ is a handlebody when $F$ has boundary, the result applies to incompressible surfaces in handlebodies. Lei, Liu, Li, Vesnin \cite{5} gave a necessary and sufficient condition for a surface sum of two handlebodies along a connected surface to be a handlebody.\par

 We work on differentiable manifolds, including the handlebodies and the surfaces embedded in it. To construct the interested surfaces, we first associate the handlebody $V$ whose genus is larger than or equal to two, and the surface $S$ properly embedded in it with a height function $h$. We divide $V$ through a maximal essential disk system $\mathcal{D}$, and study $S$ as the union of the subsurfaces in each division. We say a properly embedded surface $S$ in $V$ is \emph{pseudo-essential}, if for each horizontal disk $D$ that intersects $S$ transversely, the intersection $S\cap D$ consists of arc(s) alone (for details, see $\S$\ref{C2}). We prove a result that completely determines whether a given psuedo-essential surface in standard form is incompressible or not.\par

We will show that every pseudo-essential surface can be constructed through a simple canonical gluing of disk components. We can also show that, the class of pseudo-essential surfaces includes the class of incompressible surfaces. Each disk component as a gluing piece of the surface is isotoped to a particular form, so that some disks will be considered interesting in the sense that they contain critical points of $h$. We will formulate the above mentioned in $\S$\ref{C3}.

Furthermore, when a pseudo-essential surface is placed in our standard form, we claim there exists an algorithm to tell if it is incompressible or not. And we give a complexity measurement $\mathcal{L}$ in $\S$\ref{C4}, so that $S$ is placed in \emph{standard position} when $\mathcal{L}$ is minimized. $S$ under such a placement can be considered well-behaved, in the sense that, if $S$ is incompressible, it possesses certain properties listed in Proposition \ref{no saddle}. We determine whether $S$ is incompressible or not by simply examining the combination of disks called \emph{polygons} in each component of $\overline{\{V\setminus \mathcal{D}\}-S}$, that can be altered as described in Definition \ref{polygon}. We let $S$ retract to a graph $\mathcal{G}_S$, whose vertices mark the critical points of $S$. Then we have the following statement as our main result: \par

\begin{theorem}\label{THM1}
Let $V$ be a handlebody whose genus is larger than or equal to two, $\mathcal{D}$ be a maximal essential disk system of $V$. Suppose $S\subset V$ is a connected pseudo-essential surface, that is placed in standard position with respect to $\mathcal{D}$. Then there is an algorithm of finite many steps to determine whether $S$ is incompressible or not. $S$ is compressible if and only if $\mathcal{G}_S$ is trivial, or there exists a set of polygons satisfy the following linear equation set: \par

\begin{equation}\label{equation1}
    \begin{cases}
      \frac{1}{2}(n_2+(-1)\cdot n_6+...+(2-m)\cdot n_{2m})=1\\
      n_2+(2)\cdot n_4+...+(m)\cdot n_{2m}=|\mathcal{E}_{\mathcal{D}}|=|\mathcal{E}_S|\\
      n_2\leq \mathcal{N}_c\cdot|\mathcal{P}_2|
    \end{cases}
\end{equation}

where $m$, $\mathcal{N}_c$, $|\mathcal{E}_{\mathcal{D}}|$, $|\mathcal{E}_S|$ and $|\mathcal{P}_2|$ are positive integers determined by $S$, and $n_{2m}$ is the amount of polygons with $2m$ edges.\par

\end{theorem}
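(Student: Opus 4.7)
The plan is to read a compressing disk off the combinatorial data of how it meets the cutting system $\mathcal{D}$, and then to express the existence of such data as the linear system (\ref{equation1}).

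Suppose first that $S$ is compressible with compressing disk $D'$. I would isotope $D'$ into general position with respect to $\mathcal{D}$, so that $D'\cap\mathcal{D}$ consists of properly embedded arcs and closed circles in $D'$. Standard innermost-circle and outermost-arc reductions, combined with the standard-position hypothesis and Proposition \ref{no saddle} (whose conclusions constrain how $S$ can meet critical levels of $h$), let me remove all circle components of $D'\cap\mathcal{D}$ without losing the compressing-disk property. What remains is a decomposition of $D'$ by arcs into polygonal faces, each contained in a component of $\overline{V\setminus\mathcal{D}}$ and each matching a polygon of $S$ in the sense of Definition \ref{polygon}. Writing $n_{2k}$ for the number of such faces with $2k$ sides, I then read off the three equations. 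The first is an Euler-characteristic identity: writing $F,E,V$ for faces, edges, and vertices of the cell structure on $D'$, using $F=\sum n_{2k}$, $2E=\sum 2k\cdot n_{2k}$ (each arc of $D'\cap\mathcal{D}$ is shared by two faces), and the standard count of boundary vertices coming from the alternating $\mathcal{D}$- and $S$-edges, the identity $\chi(D')=1$ simplifies to the first line of (\ref{equation1}). The second equation comes from counting $\mathcal{D}$-edges: $\sum k\cdot n_{2k}$ equals the number of arcs of $D'\cap\mathcal{D}$, which under the standard-position set-up is identified with both $|\mathcal{E}_{\mathcal{D}}|$ and $|\mathcal{E}_S|$. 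The third is a supply constraint: since $D'$ is embedded, each bigon in the decomposition must be realized by one of the finitely many admissible bigons of $\mathcal{P}_2$, each usable at most $\mathcal{N}_c$ times without forcing a self-intersection, giving $n_2\le\mathcal{N}_c\cdot|\mathcal{P}_2|$. The degenerate case where $\mathcal{G}_S$ is trivial is handled separately, since then $S$ has no essential critical structure and compressibility is immediate by inspection.

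For the converse I would start from an integer solution $\{n_{2k}\}$ of (\ref{equation1}) and reconstruct a candidate compressing disk by gluing the corresponding polygons along matching $\mathcal{D}$- and $S$-edges. The first equation guarantees that the resulting $2$-complex has Euler characteristic $1$; the second guarantees that the $\mathcal{D}$- and $S$-edges match up consistently; the third guarantees that the bigons required for the assembly actually exist in $V$. Embeddedness of the resulting disk and essentiality of its boundary on $S$ follow from the geometric meaning of the polygon decomposition, and the algorithm terminates because equation two bounds each $n_{2k}$ by $|\mathcal{E}_S|/k$, so only finitely many tuples need be enumerated and tested.

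The main obstacle, I expect, is the converse direction — upgrading a purely numerical solution of (\ref{equation1}) to a bona fide embedded compressing disk. Euler-characteristic and edge-matching data alone do not automatically produce an embedded disk rather than a non-simply-connected surface or an immersion, and the bigon bound in equation three is exactly the ingredient that must be exploited to rule this out; carrying out this realization step tightly, without reintroducing closed curves of intersection with $\mathcal{D}$, will be the delicate point of the argument.
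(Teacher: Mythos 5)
Your overall strategy matches the paper's: the forward direction puts a compressing disk into normal position (Proposition \ref{normal}), reads the first equation off as the Euler characteristic of the cell structure that $D_c\cap\mathcal{D}$ induces on $D_c$ (the pull-back graph is 3-valent, giving $\frac{1}{2}\sum 2k\,n_{2k}-\frac{3}{2}\sum k\,n_{2k}+\sum n_{2k}=1$), the second as the count of $\mathcal{D}$-edges versus $S$-edges, and the third from the bound of Lemma \ref{finiteness} on parallel copies of a bigon; the converse reassembles polygons into a disk; and termination follows because the equations admit only finitely many positive integer solutions. Two points, however, need attention.

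First, the ``delicate point'' you flag in the converse is real, and the paper does not close it by upgrading a bare integer solution: the phrase ``a set of polygons satisfies the equation set'' is \emph{defined} in \S\ref{C5} to mean that actual elements of the adjacency sets $\mathcal{P}^J_I$ (polygons sharing common edges, possibly taken with multiplicity up to the bound $\mathcal{N}_c$) can be glued into a connected \emph{embedded} surface $S_D$ whose polygon and edge counts then satisfy the equations. The algorithm accordingly enumerates polygon selections --- checking adjacency and that no two chosen bigons intersect --- not integer tuples. Once connectedness and embeddedness are part of the hypothesis, $\chi=1$ in a handlebody forces the glued surface to be a disk, and nontriviality of its boundary on $S$ follows from condition 1 of Definition \ref{polygon} (a trivial disk would force an edge meeting $\mathcal{D}$ twice near a local extremum). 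If you insist on a purely numerical condition, the realization step genuinely fails, so you must either build realizability into the condition as the paper does or supply the missing argument.

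Second, you have misread ``$\mathcal{G}_S$ is trivial.'' In the paper this means some component of $\mathcal{G}_S\setminus\mathcal{D}$ contains a cycle; compressibility then follows because such a cycle violates property 1 of Proposition \ref{no saddle} and yields an essential loop of $S$ bounding a disk in a ps-ball --- not because ``$S$ has no essential critical structure.'' As written, your treatment of that branch proves nothing.
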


See $\S$\ref{Algorithm} for the algorithm mentioned in the above theorem. We notice that the above equation set has at most finitely many positive integer solutions.

If $S$ is properly embedded in a thickened surface $F\times [0,1]$, we have the following corollary that coincides with a probably well-known result, stated as Proposition 3.1 in \cite{9}:\par

\begin{corollary}\label{COR1}
Let $V$ be a handlebody, $\mathcal{D}$ be a maximal essential disk system of $V$. Suppose $S$ is a pseudo-essential surface that can be isotoped so that $S \subset \partial V \times [0,1] \subset V$. When $S$ is placed in standard position with respect to $\mathcal{D}$, it is incompressible if and only if $\mathcal{G}_S$ is nontrivial, and $S$ is $\partial$-parallel to $\partial V$ .\par
\end{corollary}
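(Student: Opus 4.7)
The plan is to derive Corollary~\ref{COR1} from Theorem~\ref{THM1} together with the classical fact that a properly embedded incompressible surface in a trivial $I$-bundle over a closed surface is $\partial$-parallel. This classical fact is essentially the content of Proposition~3.1 of \cite{9}, which the corollary is designed to recover inside the framework of pseudo-essential surfaces in standard position.

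For the forward direction, assume $S$ is incompressible in $V$. The nontriviality of $\mathcal{G}_S$ is immediate from Theorem~\ref{THM1}, since otherwise the first clause of the compressibility condition would apply. To prove $\partial$-parallelism, I would observe that $S \subset \partial V \times [0,1]$ is properly embedded and remains incompressible in the collar, as any compressing disk lying in the collar is also a compressing disk of $S$ in $V$. The collar is a trivial $I$-bundle over the closed surface $\partial V$, so by the classical theorem $S$ is parallel to a subsurface of the boundary $(\partial V \times \{0\}) \cup (\partial V \times \{1\})$. Since $\partial V \times \{1\}$ lies in the interior of $V$ while $\partial S \subset \partial V \times \{0\} = \partial V$, the parallelism can only be realized through $\partial V \times \{0\}$, so $S$ is $\partial$-parallel to a subsurface of $\partial V$ as required.

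For the reverse direction, assume $\mathcal{G}_S$ is nontrivial and $S$ is $\partial$-parallel to a subsurface $F_0 \subset \partial V$. Then $S$ cobounds a product region $R \cong F_0 \times [0,1]$ with $F_0$ inside $V$, so any compressing disk $D$ for $S$ must lie on one of the two sides of $S$. If $D \subset R$, then $\partial D$ is essential in $S \cong F_0$ but bounds a disk in a trivial $I$-bundle, a contradiction. If $D$ lies on the opposite side of $S$, then pushing $D$ together with the product $R$ onto $\partial V$ yields a compressing disk for $F_0$ in $V$, and through the parallelism between $S$ and $F_0$ this can be repackaged as a collection of polygons for $S$ in standard position whose counts satisfy~\eqref{equation1}, contradicting Theorem~\ref{THM1}.

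The hard part will be the last step of the reverse direction, namely translating a hypothetical compressing disk on the outer side of $S$ through the isotopy that realizes $\partial$-parallelism into a polygon combination satisfying~\eqref{equation1}. Making this conversion explicit requires careful bookkeeping of how the standard-position polygons on $S$ correspond to compressive data for $F_0 \subset \partial V$, and this is where the bulk of the technical work—leaning on the machinery assembled in $\S$\ref{C2}–$\S$\ref{C4}—will be concentrated.
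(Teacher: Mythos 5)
Your forward direction imports exactly the result the corollary is designed to recover. The paper introduces Corollary~\ref{COR1} as a statement that ``coincides with'' Proposition~3.1 of [9]; its own proof therefore does not invoke that classical theorem, and doing so makes the corollary vacuous as a test of the paper's machinery. The paper instead derives $\partial$-parallelism internally: since $S\subset \partial V\times[0,1]$ one has $|S\cap C|=0$, and the key geometric claim is that if $S$ is in standard position but \emph{not} $\partial$-parallel, then $S$ must contain a movable saddle, which contradicts property~2 of Proposition~\ref{no saddle}; nontriviality of $\mathcal{G}_S$ then follows from property~1 (the paper does not route this through the full equation set of Theorem~\ref{THM1} at all). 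The step your argument is missing is precisely this internal one: that failure of $\partial$-parallelism forces a movable saddle. Replacing it with a citation of the classical $I$-bundle theorem is logically admissible but defeats the stated purpose of the corollary.

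The reverse direction also has a defect beyond the incompleteness you acknowledge. For a hypothetical compressing disk $D$ on the outer side of the product region $R$, you propose to ``repackage'' it as a collection of polygons satisfying~\eqref{equation1} and call this a contradiction of Theorem~\ref{THM1}. But Theorem~\ref{THM1} says $S$ is compressible \emph{if and only if} $\mathcal{G}_S$ is trivial or such a polygon collection exists; exhibiting polygons that satisfy~\eqref{equation1} would therefore \emph{confirm} compressibility, not contradict anything. To prove incompressibility from the hypotheses you must show that no admissible polygon collection satisfies~\eqref{equation1}, or exclude compressing disks directly (for instance by showing that a compression of a $\partial$-parallel $S$ with $\mathcal{G}_S$ nontrivial forces a reduction of $\mathcal{L}$ or a cycle in $\mathcal{G}_S\setminus\mathcal{D}$). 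As written, the case $D\not\subset R$ is not only unfinished but aimed in the wrong logical direction.
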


In $\S$\ref{C2} we give some basic notations applied in this paper. In $\S$\ref{C3} we give a construction of pseudo-essential surfaces. In $\S$\ref{C4} we give a \emph{standard position} for pseudo-essential surfaces. In $\S$\ref{C5} we give a \emph{normal position} to regulate the compressing disks. We claim that any compressing disks, if there exists any, can be put or divided into a union of disk(s) in such a position. In $\S$\ref{C6} we demonstrate the application of our algorithm through some examples.\par

\section{Preliminary Background} \label{C2}

Let $M$ be a 3-dimensional orientable differentiable manifold with or without boundary. Let $S\subset M$ be a properly embedded surface, i.e., the interior Int$(S)$ and the boundary $\partial S$ of $S$ satisfy the inclusions Int$(S)\subset$ Int$(M)$ and $\partial S \subset \partial M$. A \emph{compressing disk} for $S\subset M$ is an embedded disk $D\subset M$ such that $\partial D \subset S$, and Int$(D)\subset M\setminus S$ and $\partial D$ is an essential loop in $S$, i.e. the map $\partial D \to S$ induces an injection $\pi_1(\partial D) \to \pi_1(S)$. A properly embedded surface $S\subset M$ is \emph{incompressible} if there are no compressible disks for $S$ and no component of $S$ is a sphere that bounds a ball. If $S\subset M$ is connected and 2-sided (i.e., if $S$ is orientable) then $S$ is incompressible if and only if the induced map $\pi_1(S) \to \pi_1(M)$ is injective and $S$ is not a sphere that bounds a ball. Incompressible surface is an important subject of research in the theory of 3-manifolds. \par

Throughout the paper, we assume all the manifolds involved are differentiable. Let $V$ be a 3-dimensional handlebody, generally assumed to have genus larger than or equal to two. We place $V$ vertically, choose a Morse function $h: \mathbb{R}^3\to \mathbb{R}$, $\forall u=(x,y,z)\in \mathbb{R}^3 $, $h(u)=z$, and a \emph{maximal essential disk system} $\mathcal{D}=\{D_1,\cdots,D_{3g-3}\}$, such that each of $D_1$, $D_2$, $\{D_{3},D_{4}\}$, $\cdots, \{D_{3g-6},D_{3g-5}\}$, $D_{3g-4}$, and $D_{3g-3}$ is a preimage of a regular value of $h$ at different heights, shown as in Figure \ref{f1} below. A disk $D$ in $V$ is called a \emph{horizontal disk} if it is a component of the preimage of a regular value of $h$. We give the definition for the \emph{pseudo-essential surfaces} in handlebodies as the class of subjects we study in this paper. \par

\begin{figure}[h!]
\centering
  \includegraphics[width=9.5cm]{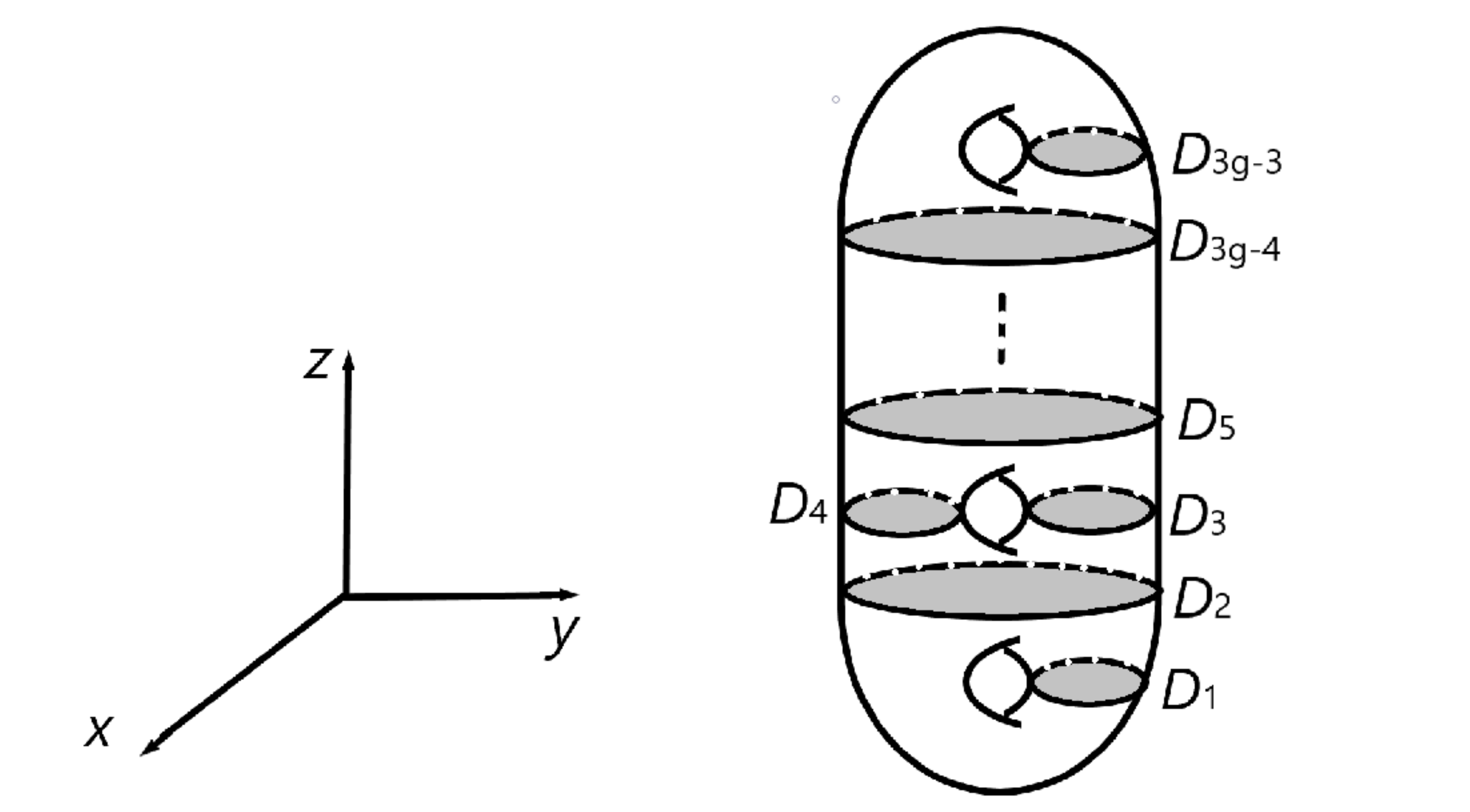}
  \caption{}
\label{f1}
\end{figure}

\begin{definition}\label{pseudo}
A properly embedded surface $S$ in $V$ is \emph{pseudo-essential} (with respect to a maximal essential disk system $\mathcal{D}$ and the height function $h$), if for each horizontal disk $D$ that intersects $S$ transversely, the intersection $S\cap D$ consists of arc(s) alone. \par
\end{definition}

Let $C$ be the spine (a 3-valent graph) that $V$ retracts to, so that a disk in $\mathcal{D}$ retracts to a point in the interior of an edge in $C$, as in Figure \ref{f2}(a). For a surface $S$ properly embedded in $V$, we isotope $S$ in $V$, while keeping $\partial S\subset \partial V$, so that $|S\cap C|$ reaches the minimum, and we fix the points of $S\cap C$ prior to any isotopies of $S$ for the rest of this paper. We use the notation $h_{M_S}=h|_{M_S}:M_S\to \mathbb{R}$ to denote the Morse function on the submanifold $M_S$. Following from Morse theory, we can require the critical heights of $h_{\partial V}$, $h_S$, and $h_{\partial S}$ are all regular and distinct. We may also require these values to be distinct from the discrete values $h_{\mathcal{D}}$. And we say that $S$ is in a \emph{general position} with $\mathcal{D}$ and $C$, if $S$ meets all the conditions above. \par

$\mathcal{D}$ cuts $V$ into $2g-2$ 3-balls, we consider the closure of them, $\{P_k,1\leq k\leq 2g-2\}$, each is called a {pant-shaped} ball, or simply, a ps-ball, see (b) and (c) in Figure \ref{f2}. A ps-ball $P_i$ has 3 horizontal disks in $\partial P_k$ from copies of $\mathcal{D}$, see Figure \ref{f2}(c).\par

\begin{figure}[h!]
\centering
  \includegraphics[width=12cm]{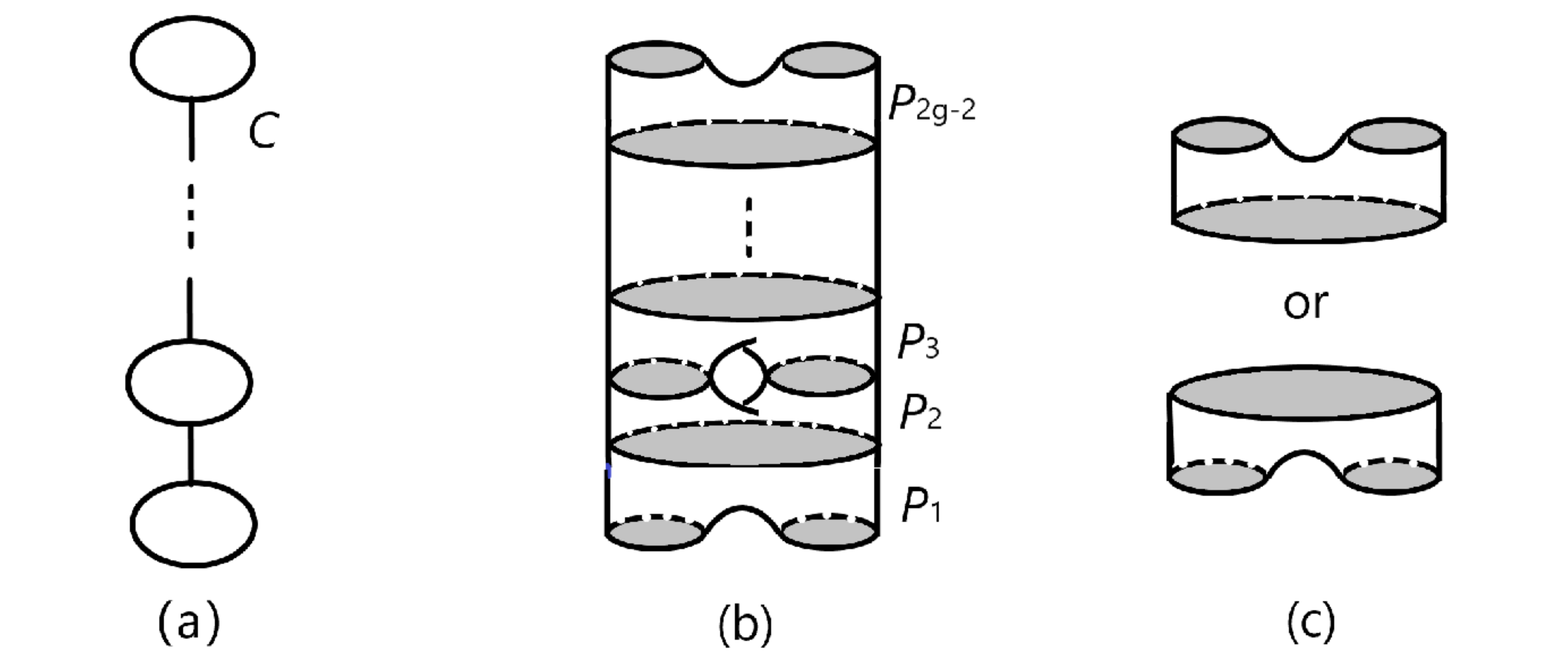}
  \caption{} \label{f2}
\end{figure}

Let $s_1,s_2,\cdots,s_K$ be all the critical heights of $h_S$ and $h_{\partial S}$ with $s_1<s_2<\cdots<s_K$. We can take an enough small $\varepsilon_i>0$, so that $s_i$ is the only critical height of all the critical heights of $h_S$ and $h_{\partial S}$ in $I_i=[s_i-\varepsilon_i,s_i+\varepsilon_i]$. Let $W_i$ denote the $I$-product structures, so that each $W_i =h^{-1}(I_i)=E_i\times I_i$, and that $E_i\times \{z_i\}, z_i\in I_i$ is a horizontal disk. Additionally, we can adjust each $W_i$, so that each component of the complements of $\{\cup W_i\}$ (the union of all such $W_i$'s) in $V$ is one of the following two cases:\par

(I) A closed ball that has three disk intersections with the boundaries of some $W_i$'s. and such a component is called a \emph{short ps-ball}, denoted as $X_j$, $X_j \in \overline{V\setminus \{\cup W_i\}}$, $1\leq j \leq 2g$; \par

(II) A closed ball which contains the highest or lowest critical height, that has only one disk intersection with the boundary of one of the $W_i$'s. \par

We may require (II) to have empty intersections with $S$, and it is considered as the trivial case. It follows from Morse theory that $S$ can be chosen as a representative of its isotopy class to satisfy the above conditions. \par

\section{The Construction of pseudo-essential Surfaces} \label{C3}

We now give the definition for the gluing disk components mentioned in the introduction.

\begin{definition}\label{2ndisk}
Let $S_{2n}^{i}$ denote a disk in a product $W_i$ whose boundary $\partial S_{2n}^{i}$ consists of $n$ edges in $E_i\times \partial I_i$, and $n$ edges in $\partial E_i\times I_i$, where $n= 1, 2, 3$, or $4, 6, ... , 2k$, and k is a positive integer.  Similarly, we use the notation $S_{2n}^{j}$ to denote a disk in a short ps-ball $X_j$, where $n=1, 2, 3$. Such a disk is called a \emph{2n-disk} if it subjects to the following restrictions:\par

(1) For each horizontal disk that intersects $S^i_{2n}$ or $S_{2n}^{j}$ transversely, the intersection consists of properly embedded arcs.

(2) Each $2$-disk contains a critical point of $h_{\partial S}$ on $S\cap \partial V$. See Figure \ref{f3}(a).

(3) Each $4$-disk in $W_i$ contains no critical point, and each $4$-disk in $X_j$ may contain a critical point of $h_{\partial S}$ in each arc on $S\cap \partial V$. See Figure \ref{f3}(b).\par

(4) Each $6$-disk contains a critical point of $h_{\partial S}$ on $S\cap \partial V$. See Figure \ref{f3}(c).\par

(5) Each $2N$-disk (where $N$ is an even integer, $N\geq 4$) is a non-degenerate saddle that contains a critical point $c_{i}$ of $h_S$ in its interior. $c_{i}$ is contained in a horizontal disk $E_i\times \{s_i\}$, so that $E_i\times \{s_i\}\cap S_{2N}$ is the only non-transverse intersection of $E_i\times \{z_i\} \cap S_{2N}$ for all $z_i\in I_i$. Moreover, $E_i\times \{s_i\}\cap S_{2N}$ is a union of $N$ arcs intersecting each other(s) at  $c_{i}$. See Figure \ref{f3}(d) for an example of a $12$-disk.\par

\end{definition}

\begin{figure}[h!]
\centering
  \includegraphics[width=16cm]{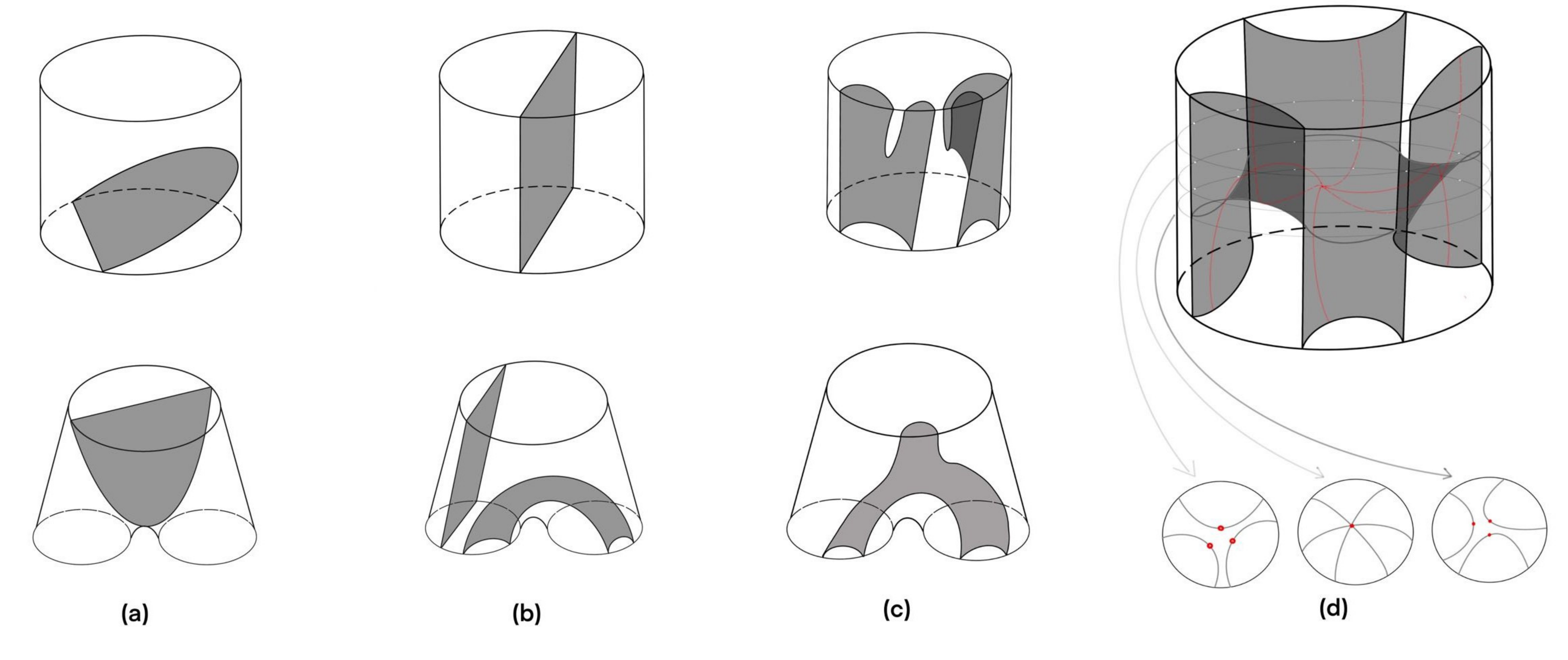}
  \caption{} \label{f3}
\end{figure}

We say a $2n$-disk is \emph{trivial} if it contains no critical points, otherwise we say it is \emph{nontrivial}. We now show that given a maximal disk system $\mathcal{D}$ and a height function $h$, any properly embedded incompressible surface $S\subset V$ is pseudo-essential.

\begin{proposition}\label{essential}
A pseudo-essential surface can be isotoped so that each of its disk component is as described in Definition \ref{2ndisk}. Let $S$ be an incompressible surface properly embedded in $V$, then $S$ can be isotoped so it is pseudo-essential.\par
\end{proposition}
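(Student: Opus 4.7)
The plan is to prove the two assertions separately. For the first, we fix a pseudo-essential $S$ in general position with $\mathcal{D}$ and $C$ and classify the components of $S\cap W_i$ and $S\cap X_j$ by the (at most one) critical point each contains. The key preliminary observation is that pseudo-essentiality rules out interior extrema of $h_S$: a local maximum (resp.\ minimum) in $\mathrm{Int}(S)$ would force a small closed level-set loop just below (resp.\ above) it on some horizontal disk, contradicting the arc-only condition. Thus $h_S|_{\mathrm{Int}(S)}$ has only saddle singularities, while $h_{\partial S}$ may carry the usual boundary extrema.

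With this in hand, a component of $S\cap W_i$ containing no critical point is the product of an arc in $E_i$ with $I_i$, hence a rectangular $4$-disk of Definition \ref{2ndisk}(3). A component containing a non-degenerate interior saddle admits a local normal-form isotopy placing its $s_i$-slice in the standard saddle cross; extending these branches by the product structure on $W_i\setminus\{s_i\}$ and reading off the alternating horizontal and vertical edges of the boundary yields a $2N$-disk of item (5). A component in a slab $W_i$ whose critical point is a boundary extremum of $h_{\partial S}$ is a small cap meeting $\partial V$ at the extremum, i.e.\ the $2$-disk of item (2). Finally, components in a short ps-ball $X_j$ either slide straight through as $4$-disks (item (3)), or cap off at the three-pronged junction of $X_j$, producing the $6$-disks of item (4) when the cap touches all three horizontal faces of $\partial X_j$. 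This exhausts the cases and realises each component as a $2n$-disk after an isotopy supported near the critical points.

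For the second assertion, take $S$ incompressible and in general position, and aim to remove every closed curve from $S\cap D$ as $D$ ranges over horizontal disks. Pick such a $D$ containing a loop of intersection, and let $\gamma$ be a loop that is innermost on $D$: it bounds a subdisk $\Delta\subset D$ with $\mathrm{Int}(\Delta)\cap S=\emptyset$. If $\gamma$ were essential on $S$, then $\Delta$ would be a compressing disk for $S$, contradicting incompressibility; therefore $\gamma$ bounds a subdisk $\Delta'\subset S$. The $2$-sphere $\Delta\cup\Delta'$ lies in the irreducible handlebody $V$ and thus bounds a $3$-ball, along which $S$ can be isotoped to eliminate the loop $\gamma$. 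Iterating removes every closed component of $S\cap D$ at every regular height and yields a pseudo-essential representative of the isotopy class of $S$.

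The main obstacle is certifying that the iterative scheme in the second part terminates and does not cascade into new closed intersections at other heights. We would address this with a well-founded complexity---such as the lexicographic pair $(\#\text{interior extrema of }h_S,\ |S\cap(\cup\mathcal{D})|)$---and verify that each reduction strictly decreases it after a small general-position perturbation. A related subtlety is that the argument must also handle closed loops of intersection with arbitrary horizontal disks, not only the finitely many disks of $\mathcal{D}$; this is why the reduction is driven by the interior extrema of $h_S$ rather than by a discrete intersection count alone.
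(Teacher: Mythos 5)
Your route is genuinely different from the paper's: the paper handles both assertions at once by observing that incompressibility of $S$ forces each component of $S\cap W_i$ (resp.\ $S\cap X_j$) to be an incompressible, hence trivial-$\pi_1$, properly embedded surface in a $3$-ball, i.e.\ a disk, and then isotopes each such disk to a $2n$-disk; pseudo-essentiality is then read off because $2n$-disks meet horizontal levels in arcs. Your opening observation --- that the arc-only condition kills interior extrema of $h_S$, so only saddles survive in $\mathrm{Int}(S)$ --- is a nice addition and actually covers the first assertion for pseudo-essential surfaces that are \emph{not} incompressible, where the paper's ``incompressible in a ball'' step is unavailable. However, your case analysis in the first part is incomplete: you assign every component of $S\cap W_i$ whose critical point is a boundary extremum of $h_{\partial S}$ to the $2$-disk case. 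That is wrong. When the two endpoints of $\partial S$ that merge at the extremum belong to two \emph{different} level arcs, the component has two horizontal edges below the critical level and one above --- this is exactly the $6$-disk of Definition~\ref{2ndisk}(4), which the paper produces in its $n=3$ case and which lives in a slab $W_i$, not only at the junctions $X_j$. (The remaining subcase, where the two merging endpoints bound a single arc that does not die, produces a circle in the level set and is excluded by pseudo-essentiality.) You also need the paper's subdivision of a slab into sub-products for components whose boundary carries several critical points of $h_{\partial S}$; ``at most one critical point per component'' is a consequence of that subdivision, not something you get for free.

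The more serious gap is in the second part. Your proposed termination measure $(\#\text{interior extrema of }h_S,\ |S\cap(\cup\mathcal{D})|)$ does not control the phenomenon you need to kill: a circle component of $S\cap D$ can be born at an interior \emph{saddle} of $h_S$ (two strands of a single level arc reconnect into an arc plus a circle), so a surface can meet horizontal disks in circles while having no interior extrema at all; and after you push $\Delta'$ across the $3$-ball, the new sheet is a near-horizontal copy of $\Delta$, which itself carries one interior extremum, so the first coordinate of your measure need not drop. The standard repair is to take as complexity the total number of critical points of $h_S$ (or the number of saddles): the subdisk $\Delta'\subset S$ bounded by an innermost inessential circle at constant height must contain at least one interior critical point in excess of the single extremum of the replacement cap, and this count strictly decreases under the isotopy. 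With that substitution, and with the $6$-disk case restored in the first part, your argument goes through; as written, neither the classification nor the induction closes.
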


\begin{proof}

As in the above section, we place the handlebody $V$ vertically. Let $C$ be the spine (a 3-valent graph) that $V$ retracts to. We pick a maximal essential disk system $\mathcal{D}$, so that a disk in $\mathcal{D}$ retracts to a point in the interior of an edge in $C$. Then we isotope $S$ so that $|S\cap C|$ reaches minimum and $S$ is in a general position with $\mathcal{D}$ and $C$. We consider a connected component $S^i_{2n} \subset S \cap W_i$, or $S^j_{2n} \subset S\cap X_j$. Since $S$ is incompressible in $V$, $S^i_{2n}$ (or $S^j_{2n}$)  must be an incompressible surface properly embedded in the 3-ball $W_i$ or $X_j$, which implies $S^i_{2n}$ (or $S^j_{2n}$) is a disk. We assume $S^i_{2n}$ (or $S^j_{2n}$) has $2n$ edges, and all isotopies mentioned below are perform in either $W_i$ or $X_j$. \par

When $n=1$, obviously we can isotope $S^i_{2n}$ or $S^j_{2n}$ so that it only contains a critical point of $h_{\partial S}$ on $S\cap \partial V$, and it is a $2$-disk.\par

When $n=2$, in $W_i$ we can isotope the arcs of $S^i_{2n}\cap \partial V$, so that it is ``vertical'', i.e., it intersects each horizontal disk $E_i\times \{z_i\}$ in single point, and it consists of no critical points of $h_{\partial S}$. In $X_j$ we can either perform the same isotopy, or when the edges of $S^j_{2n}$ intersect two disks in $\mathcal{D}$, we isotope $\partial S^j_{2n}$ so that $S^j_{2n} \cap \partial V$ contains only one critical point of $h_{\partial S}$ in each edge of its boundary in $\partial V$. In either case, the interior of  $S^i_{2n}$ or $S^j_{2n}$ consists of no critical point of $h_S$, and we can isotope $S^i_{2n}$ or $S^j_{2n}$ so that it is a $4$-disk. \par

When $n=3$, we consider the edges of $\partial S^i_{2n}$ (or $S^j_{2n}$). We can isotope $S^i_{2n}$ such that each of two of its edge in $\partial S^i_{2n}\cap \partial E_i \times I_i$, $A_1, A_2 \subset S^i_{2n} \cap \{\partial E_i\times I_i\}$ intersects each horizontal disk $E_i\times \{z_i\}$ in single point, and the edge $A_3$ is the only edge that contains the critical point of $h_{\partial S}$. $S^i_{2n}$ (or $S^j_{2n}$) is not regarded as a non-degenerate saddle, and the interior of it can be altered so it contains no critical point of $h_S$. Therefore, we can isotope $S^i_{2n}$ such that it is a $6$-disk.\par

When $n=N$, $N\geq 4$, and if we can isotope $S^i_{2N}$ such that each edge of $\partial S^i_{2N}$, $A_t\subset S^i_{2N} \cap \{\partial E_i\times I_i\}$ intersects each horizontal disk $E_i\times \{z_i\}$ in single point. Then $\partial S^i_{2N}$ does not contain a critical point. We can isotope $S^i_{2N}$ so that it is a non-degenerate saddle, and the only non-transverse intersection between any of the horizontal disks with $S\cap W_i$ is in $E_i\times \{s_i\}$, so that $E_i\times \{s_i\}\cap S_{2N}$ is a union of $N$ arcs intersecting each other(s) at the critical point of $h_S$. See Figure \ref{f3}(d). Therefore we have shown that $S^i_{2N}$ is isotoped to a $2N$-disk. Notice we can always adjust $S$ so that  a $2N$-disk, $N\geq 4$ is away from any $X_j$.\par

When $n=N$, $N\geq 4$, and there exists $m$ edges in $\partial V$ that intersects a horizontal disk of $E_i\times \partial I_i$ twice. As in the above arguments, we can deform $\partial S^i_{2N}$ so that each of these edges contains a critical point of $h_{\partial S}$. Then we divide $I_i$ into the union of $m+1$ subintervals, $I_i=I_i^1 \cup I_i^2\cup ... \cup I_i^{m+1}$, and regard $W_i$ as a union of sub-products, $W_i=E_i\times I^1_i\cup E_i\times I^2_i\cup ... \cup E_i\times I_i^{m+1}$. We can isotope $S^i_{2n}$ so that each critical point of $h_{\partial S}$ is the only critical point of $h_{\partial S}$ in one of the sub-products. By previous arguments, we can have each component of $S^i_{2n}\cap E_i\times I^s_i$, $1\leq s \leq  m+1$ that contains a critical point of $h_{\partial S}$ isotoped to $6$-disk. And we can isotope the other component of $S^i_{2n}\cap E_i\times I^s_i$ in each sub-product to a $4$-disk, with an exception that one of them is possibly a $2n'$-disk, $n'=n-2m\geq 8$. By the previous argument, we can deform this $2n'$-disk in the sub-product so that it is a non-degenerate saddle. See Figure \ref{f4} for an example: We separate $W_i$ into a union of two products, and we isotope the disk so that each product contains only one critical point of $h_S$ or $h_{\partial S}$. On the right, the upper product contains a $4$-disk and a $6$-disk, and the lower product contains a $8$-disk. Now that each product contains only one critical point of $h_{\partial S}$ or $h_S$, we can change the subscripts of them, so they are in accord with the other $W_i$'s. As such, each sub-product contains only disk(s) satisfying the conditions in Definition \ref{2ndisk}.\par
\begin{figure}[h!]
            \centering
            \includegraphics[width=8cm]{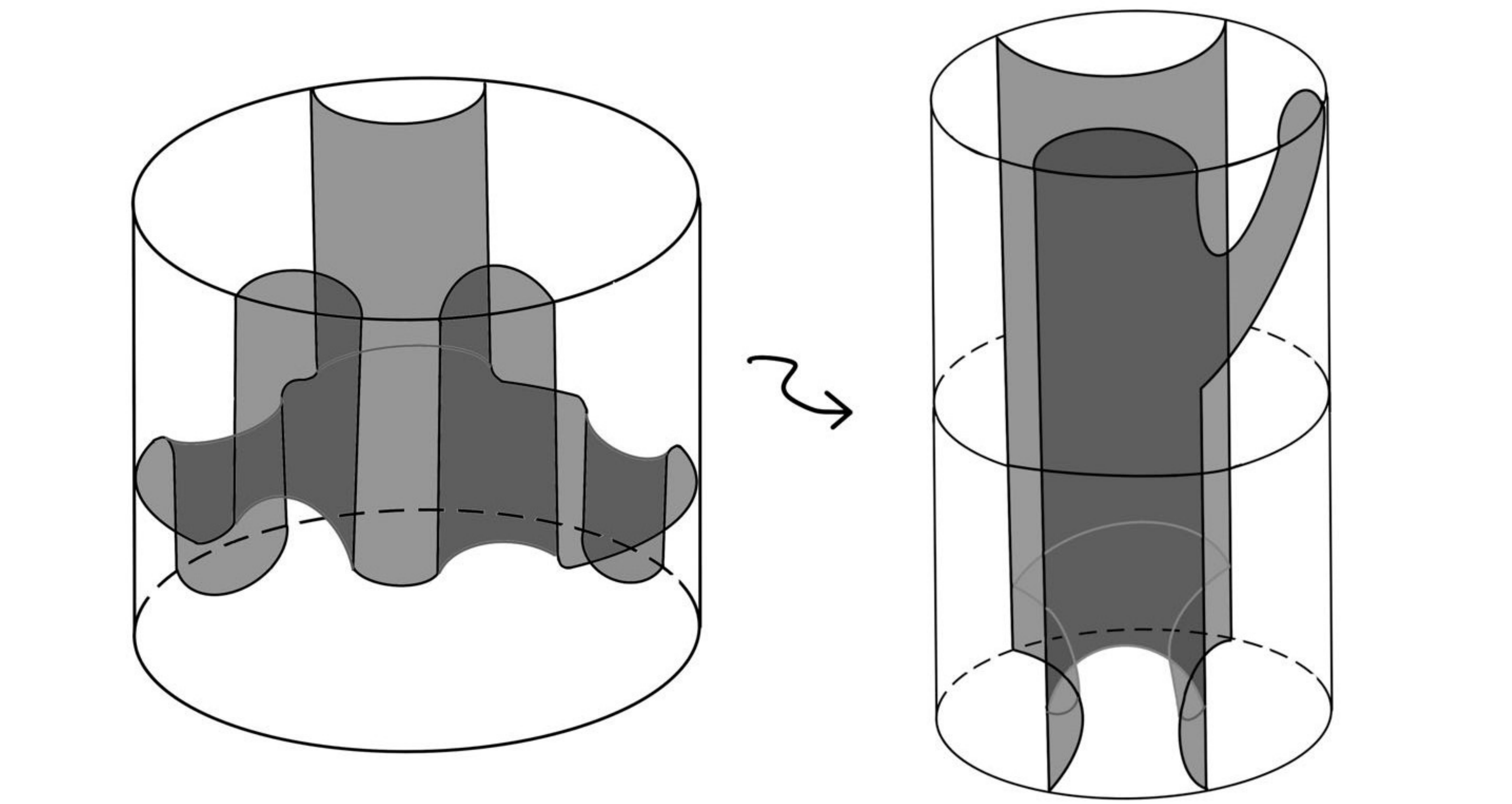}
            \caption{}
            \label{f4}
\end{figure}
Notice that, we have also proved $S$ can be isotoped so it is pseudo-essential with respect to $\mathcal{D}$ and $h$. Since for each horizontal disk that intersects a $2n$-disk transversely, the intersection consists of arcs alone.
\end{proof}

\section{Placing Surfaces in the Standard Position}\label{C4}

Consider a pseudo-essential surface $S$. Let $\eta(C)$ be a regular open neighborhood of $C$, then $\overline{V-\eta(C)\setminus \mathcal{D}}=\cup_{k=1}^{2g-2}PT_k$ is a union of ``thickened pair of pants''. Moreover, $PT_k=p_k \times [0,1]$, and $\partial PT_k$ is the union of $p_k \times \{0\}$, $ p_k \times \{1\}$. Here, $p_k$ is a ``pair of pants'', and $p_k \times \{0\}$ is regarded as the outer one. Let $A^x_k\subset \mathcal{D}\setminus \eta(C), 1\leq x \leq 3$ denote the annulus on $\partial PT_k$ that intersects $\mathcal{D}$. The conditions of general position require the number $|S\cap C|=|S\cap \eta(C)|$ to reach minimum, and each of the three annuli $A^x_k$ on the boundary of $PT_k$ contains transverse intersections of $A^x_k \cap S$, which are properly embedded arcs whose boundaries are on the outer circle of $\partial A^x_k$. \par

We say two arcs of $S\cap A^x_k$ has an \emph{inner-outer relation} if they co-bound a sub-disk in $A^x_k$ with $\partial A^x_k$. The arc that is closest to the outer boundary of the annulus is called an \emph{outermost arc}, and the arc that is closest to the inner boundary of the annulus is called an \emph{innermost arc}. The arcs of $S\cap A^x_k$ in each annulus can be regarded as a family of (not necessarily disjoint) sets $\cup \mathcal{A}^x_{km}$, $m\geq 1$, so that each $\mathcal{A}^x_{km}$ either consists of only one arc, or any two arcs in $\mathcal{A}^x_{km}$ admits an inner-outer relation. Moreover, each $\mathcal{A}^x_{km}$ is maximal in the sense that all arcs that has an inner-outer relation with the outermost arc, including the outermost arc, are all contained in it. For example, in Figure \ref{f5}(a), the annulus consists of two such families, each contains the red arc. And the arcs that are the same color has an inner-outer relation with each other. In particular, the red arc is an innermost arc that has an inner-outer relation with all other arcs. \par

\begin{figure}[h!]
            \centering
            \includegraphics[width=15cm]{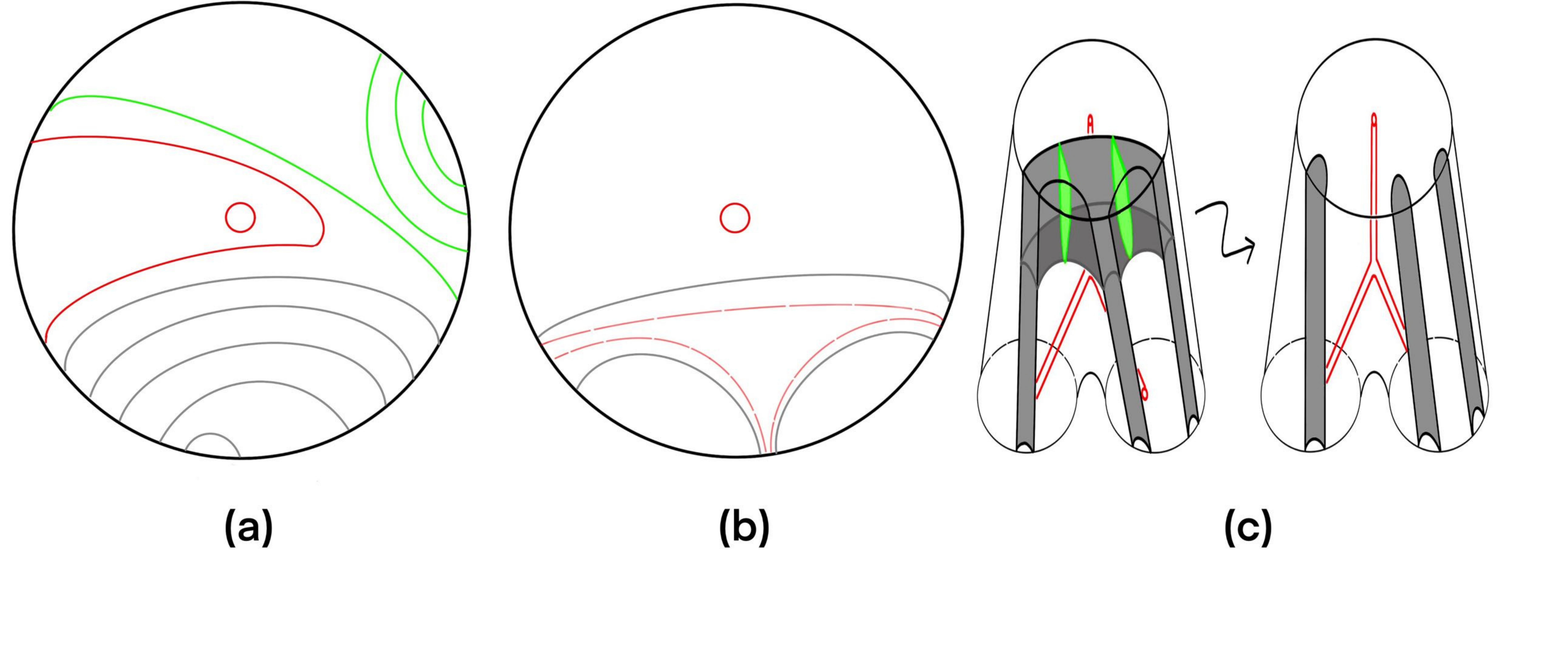}
            \caption{}
            \label{f5}
\end{figure}

As such, we call each $\mathcal{A}^x_{km}$ a \emph{maximal arc set}, and we can define a counting function $f$ whose domain is such families of arcs: $\{\mathcal{A}^x_{km},1\leq x \leq 3, 1\leq k \leq 2g-2, m\geq 1\}$, so that $f(\mathcal{A}^x_{km})=|\mathcal{A}^x_{km}|$. Let $L$ be the maximal value of the counting function $f$. For each $2n$-disk $S^i_{2n}$ that has nonempty intersections with an annulus $A$ parallel to some $A^x_k$, we can consider the arcs of $S^i_{2n}\cap A$. If $S$ intersects $A$ transversely, then the definitions above stil applies on $A$ and $S^i_{2n}\cap A$. If $S^i_{2n}\cap A$ consist of two arcs that admit an inner-outer relation, then we call $S^i_{2n}$ a \emph{movable saddle}.  We denote the set of all movable saddles as $\mathcal{D}_{ms}$. It is obvious we can isotope $S$ so that all movable saddles are contained in $W_i\cap PT_k$ for some $i$'s and $k$'s, i.e., $\mathcal{D}_{ms}\cap \eta(C)=\emptyset$. We can also assume without loss of generality, that $S$ is still in standard position after such isotopies.\par

\begin{definition}\label{DEF}
We say a pseudo-essential surface $S$ is in \emph{standard position} (with respect to $\mathcal{D}$ and $h$) if $S$ satisfies all the conditions in Definition \ref{2ndisk}, and the $(L+1)$-tuple

\begin{center}
 $\mathcal{L}=(|S\cap \mathcal{D}|,|f^{-1}(L)|,|f^{-1}(L-1)|,...,|f^{-1}(2)|, |\mathcal{D}_{ms}|)$ \par
\end{center}

is minimized in the lexicographic order, where each $|f^{-1}(l)|, 2\leq l \leq L$, is the amount of maximal arc sets $\mathcal{A}^x_{km}, 1\leq x \leq 3, 1\leq k \leq 2g-2, m\geq 1$, so that $f(\mathcal{A}^x_{km})=l$.

\end{definition}

We study how certain isotopies will affect $\mathcal{L}$. Let $S_{2n}\subset PT_k$ be a movable saddle, we say $S_{2n}$ is an \emph{outermost movable saddle} with respect to an annulus $A^x_k$, if there does not exists any arcs of intersection on the sub-disk corresponding to the $\frac{n}{2}$ arcs of $S_{2n}\cap A^x_k$. For example, in Figure \ref{f5}(b), each of the red dotted arcs is on the sub-disk corresponding to the 3 arcs of some 12-disk $S_{12}\cap A^x_k$.

\par

\begin{lemma}\label{arc sum}
Given an outermost movable saddle $S_{2n}\subset PT_k$ with respect to an annulus $A^x_k$,  there is an isotopy which induces $(\frac{n}{2}-1)$ $\partial$-compressions on the subsurface $S\cap PT_K$ and decreases $\mathcal{L}$.

\end{lemma}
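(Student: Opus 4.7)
The plan is to use the outermost hypothesis on $A^x_k$ to construct a nested family of $\partial$-compression disks for $S\cap PT_k$, realise the $\frac{n}{2}-1$ resulting compressions by a single ambient isotopy of $S$ inside $PT_k$, and show that this isotopy strictly lowers $\mathcal{L}$.

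First I would order the arcs of $S_{2n}\cap A^x_k$ as $\alpha_1,\alpha_2,\dots,\alpha_{n/2}$ from outermost to innermost. Since these are nested boundary-parallel arcs on the annulus $A^x_k$ with endpoints on the outer boundary circle (which lies in $\partial V$), for each $i$ there is a well-defined sub-disk $D_i\subset A^x_k$ cobounded by $\alpha_i$ and a sub-arc of the outer boundary, with $D_1\subset D_2\subset\cdots\subset D_{n/2}=\widetilde{\Delta}$. By the outermost hypothesis, $\mathrm{int}(\widetilde{\Delta})$ meets $S$ only in the arcs $\alpha_1,\dots,\alpha_{n/2}$; in particular $\mathrm{int}(D_1)$ is disjoint from $S$, and since the endpoints of $\alpha_1$ lie on $\partial S\subset\partial V$, the disk $D_1$ is a $\partial$-compression disk for $S\cap PT_k$ in $PT_k$.

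Next I would apply $\partial$-compressions inductively. Compressing along $D_1$ pushes $\alpha_1$ off $A^x_k$; after this move, $\mathrm{int}(D_2)$ is disjoint from the modified $S$, so $D_2$ serves as the next $\partial$-compression disk. Iterating yields $\partial$-compressions along $D_1,\dots,D_{n/2-1}$, leaving only the innermost arc $\alpha_{n/2}$ on $A^x_k$. Because all the $D_i$ are nested inside $\widetilde{\Delta}$ and the interior of $\widetilde{\Delta}$ is disjoint from the rest of $S$, the full sequence can be realised by a single ambient isotopy of $S$ supported in a small collar of $\widetilde{\Delta}$ inside $PT_k$; the isotopy does not cross into $\eta(C)$ or into another annulus $A^y_k$, $y\neq x$. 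Appealing to Proposition \ref{essential}, after the isotopy the saddle $S_{2n}$ can be re-expressed as a disjoint union of lower-order $2n'$-disks in the sense of Definition \ref{2ndisk}, so the resulting surface is still pseudo-essential.

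Finally I would verify that $\mathcal{L}$ strictly decreases. The isotopy removes the $\frac{n}{2}-1$ arcs $\alpha_1,\dots,\alpha_{n/2-1}$ from $S\cap A^x_k\subset S\cap\mathcal{D}$, so the first coordinate $|S\cap\mathcal{D}|$ of $\mathcal{L}$ drops by at least $\frac{n}{2}-1\geq 1$; this alone forces $\mathcal{L}$ to be strictly smaller in the lexicographic order. The main obstacle I expect is the careful verification that the isotopy is truly local inside $\widetilde{\Delta}$ and that each $D_i$ genuinely serves as a $\partial$-compression disk once the earlier $\alpha_j$ have been pushed off; this requires bookkeeping to ensure that no hidden arcs of $S$ are dragged along or created elsewhere, and is where the outermost hypothesis and the standing standard-position assumption together do the real work.
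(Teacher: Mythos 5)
There is a genuine gap at the heart of your argument. The disks $D_i$ you construct lie inside the annulus $A^x_k\subset\mathcal{D}$ and are cobounded by $\alpha_i\subset S$ and a sub-arc of the outer boundary circle, which sits in $\partial V$; so each $D_i$ is a $\partial$-compression disk for $S$ in $V$ rel $\partial V$, not for the piece $S\cap PT_k$ in $PT_k$. Performing a $\partial$-compression along such a disk changes the homeomorphism type of $S$ whenever $\alpha_i$ is essential in $S$, and the horizontal edges of a saddle lying on a disk of $\mathcal{D}$ have no reason to be inessential in $S$; hence your sequence of compressions cannot in general be ``realised by a single ambient isotopy of $S$'' --- an isotopy cannot delete an essential arc of $S\cap\mathcal{D}$, outermost or not (and you certainly cannot push $\alpha_1$ ``off $A^x_k$'' through its outer boundary, since that boundary lies in $\partial V$). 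Consequently your final step, that $|S\cap\mathcal{D}|$ drops by $\frac{n}{2}-1$, fails; the paper's proof explicitly records that $|S\cap\mathcal{D}|$ is \emph{unchanged} by the move, and the decrease of $\mathcal{L}$ takes place in the later coordinates $|f^{-1}(L)|,\dots,|f^{-1}(2)|,|\mathcal{D}_{ms}|$, because what is reduced is the size of the maximal arc sets (the nesting depth), not the number of arcs of $S\cap\mathcal{D}$.

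The move the paper actually uses is different in kind: it fixes $\partial S_{2n}$ on the outer pair of pants $p_k\times\{0\}$ and isotopes only the \emph{interior} of the saddle, sliding its critical point out of $PT_k$; the outermost hypothesis guarantees that the region swept out meets no other part of $S$. The $(\frac{n}{2}-1)$ $\partial$-compressions in the statement are compressions \emph{induced on the subsurface} $S\cap PT_k$ with respect to $\partial PT_k$, i.e.\ they describe how the piece of $S$ inside $PT_k$ simplifies once part of the saddle leaves $PT_k$; they are not operations performed on $S$ itself. To repair your nested-disk picture you would need to replace your $D_i$ by disks whose boundary consists of an arc properly embedded in $S\cap PT_k$ together with an arc of $\partial PT_k$ not lying in $\partial V$, and then verify the decrease of $\mathcal{L}$ coordinate by coordinate starting from the $|f^{-1}(l)|$ entries rather than from $|S\cap\mathcal{D}|$.
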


\begin{proof}
Given an outermost movable saddle $S_{2n}\subset PT_k$ with respect to an annulus $A^x_k$, we can fix the boundary of $S_{2n}$ on the ``pair of pants'', i.e., $p_k \times \{0\}$. Then we isotope the interior of $S_{2n}$, so that the critical point contained in it is moved outside of $PT_k$, and $A^x_k$ intersects with the $(\frac{n}{2}-1)$ sub-surfaces in $PT_k$ in $(\frac{n}{2}-1)$ arcs after such an isotopy. This isotopy will simultaneously induce $(\frac{n}{2}-1)$ $\partial$-compressions on the compressed surface $S_{2n}$ with the boundary $\partial PT_k$. It is easy to check that each of the induced $\partial$-compressions will reduce $\mathcal{L}$, even if $S$ has other intersections with the corresponding annulus. We notice that the number $|S\cap \mathcal{D}|$ does not change after such $\partial$-compressions. See the isotopy shown in Figure \ref{f5}(c) for an example, where $n=6$.
\end{proof}

We induce a graph related to $2n$-disks. Let each $4$-disk retract to an arc that connects two horizontal disks; Let each $6$-disk retract to 3 arcs that intersect at a $3$-valent vertex as the critical point of $h_{\partial S}$ in $\partial S$. And let each $2n$-disk, $n=4, 6$, ... ,$2k$, retract to $n$ arcs that intersect a $n$-valent vertex as the critical point of $h_S$ in its interior. Then we have $S$ retracted to a graph $\mathcal{G}_S$, with each vertex marking a critical point of $h_S$ or $h_{\partial S}$. For convenience, we assume an edge $e$ of $\mathcal{G}_S$ may contains vertices.\par

\begin{lemma}\label{L}

Let $S$ be an incompressible surface in standard position, $P_k$ be a ps-ball, then there exists no such pair of edges, $e_1$, $e_2 \subset \mathcal{G}_S \cap P_k$, that they intersect at a vertex $v$, and that they intersect some annulus $A_k^x\subset \partial P_k$ in two arcs, $\alpha_1$ and $\alpha_2$, which have an inner-outer relation.

\end{lemma}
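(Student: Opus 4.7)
The plan is to argue by contradiction: assume such edges $e_1, e_2 \subset \mathcal{G}_S \cap P_k$ exist, intersecting at $v$ and meeting $A_k^x$ in $\alpha_1, \alpha_2$ with an inner-outer relation. Let $R \subset A_k^x$ be the sub-disk they cobound with two sub-arcs $\beta_1, \beta_2$ of $\partial A_k^x \subset \partial V$. Passing, if necessary, to an innermost inner-outer pair on $A_k^x$, we may assume $\mathrm{int}(R) \cap S = \emptyset$.

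Because $e_1, e_2$ meet only at $v$ in $\mathcal{G}_S$, a regular neighborhood $B \subset S$ of the $1$-complex $e_1 \cup e_2$ is an embedded disk (band) in $S$. After a small ambient isotopy, the two short ends of $B$ can be identified with the arcs $\alpha_1, \alpha_2$ themselves. Then $D := B \cup_{\alpha_1 \cup \alpha_2} R$ is an embedded disk in $V$, embedded because the innermost choice of $R$ forbids interior intersections with $S$. The boundary $\partial D$ lies in $S \cup \partial V$: the arcs $\beta_1, \beta_2$ are in $\partial V$, while the two long sides of $B$ lie in $\mathrm{int}(S)$ if $v$ is an interior saddle of $h_S$, or partly in $\partial S \subset \partial V$ near $v$ if $v$ is a critical point of $h_{\partial S}$.

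From $D$ we extract a contradiction in two complementary cases. First, if a slight push-off of $D$ into one side of $S$ yields a disk whose boundary is an essential loop in $S$, then this push-off is a compressing disk for $S$, contradicting the incompressibility hypothesis. Otherwise, $\partial D$ is trivial in $S \cup \partial V$, in which case $D$ serves as a $\partial$-compression (or in Case of $v$ on $\partial V$, a bigon-cancellation) disk guiding an isotopy of $S$ that eliminates $\alpha_1$ and $\alpha_2$ from $S \cap A_k^x$. This isotopy strictly decreases $|S \cap \mathcal{D}|$, hence strictly decreases the first coordinate of $\mathcal{L}$, contradicting the hypothesis that $S$ is in standard position.

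The main obstacle will be verifying that the disk $D$ is genuinely embedded and that the induced push-off or isotopy has the asserted effect on $\mathcal{L}$. The embeddedness reduces to two checks: first, that $B$ is an embedded band in $S$, which follows from the fact that $e_1, e_2$ share only the vertex $v$ in $\mathcal{G}_S$; and second, that $B \cap R = \alpha_1 \cup \alpha_2$, which is exactly the content of the innermost choice of $R$. The case distinction by the nature of $v$ (saddle of $h_S$ vs.\ critical point of $h_{\partial S}$) must be handled carefully because the boundary geometry of $B$ differs, but in both situations the resulting surgery can be arranged to remove $\alpha_1, \alpha_2$ from $A_k^x$, making the lexicographic drop in $\mathcal{L}$ occur at its leading entry $|S \cap \mathcal{D}|$.
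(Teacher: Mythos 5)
Your central construction does not produce the object you need: the set $D=B\cup_{\alpha_1\cup\alpha_2}R$ is not a disk. The band $B$ is a square whose two short ends are $\alpha_1,\alpha_2$, and $R$ is a square in $A_k^x$ whose boundary is $\alpha_1\cup\beta_1\cup\alpha_2\cup\beta_2$; gluing two disks along \emph{two} disjoint arcs gives $\chi(B\cup R)=\chi(B)+\chi(R)-\chi(\alpha_1\sqcup\alpha_2)=1+1-2=0$, so $B\cup R$ is an annulus (or M\"obius band). Both branches of your dichotomy then collapse: a push-off of an annulus is not a compressing disk, and an annulus cannot ``serve as a $\partial$-compression disk,'' so you obtain neither the contradiction with incompressibility nor the isotopy that is supposed to lower $\mathcal{L}$. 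A second, independent gap is the step ``passing to an innermost inner--outer pair, we may assume $\mathrm{int}(R)\cap S=\emptyset$'': the lemma is refuted by exhibiting a contradiction for the \emph{given} pair $e_1,e_2$ meeting at $v$, and an innermost pair of arcs inside $R$ need not come from two edges of $\mathcal{G}_S$ sharing a vertex, so after this replacement the hypothesis you are trying to contradict is no longer available.

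The paper's argument is combinatorial rather than cut-and-paste. It first disposes of the case where $v$ is the $3$-valent vertex of a $6$-disk meeting $A_k^x$ twice (there $|S\cap\mathcal{D}|$ genuinely drops), and then inducts on the number of vertices carried by $e_1\cup e_2$: the isotopy of Figure \ref{f6} cancels a vertex at a time (this is where incompressibility enters, via the absence of loops in $\mathcal{G}_S$), until $e_1\cup e_2$ contains only $v$; that configuration is exactly an outermost movable saddle, and Lemma \ref{arc sum} then strictly decreases $\mathcal{L}$. Note also that the paper is explicit that the move of Lemma \ref{arc sum} leaves $|S\cap\mathcal{D}|$ unchanged and lowers $\mathcal{L}$ only in the later coordinates, so even if your surgery could be repaired, your claim that the lexicographic drop occurs at the leading entry $|S\cap\mathcal{D}|$ would still need justification in the main case. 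If you want to keep a geometric cut-and-paste flavor, the usable object is the rectangle $R$ by itself (as a guide for pushing the band across $\mathcal{D}$), not the closed-up union $B\cup R$.
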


\begin{figure}[h!]

            \centering
            \includegraphics[width=10cm]{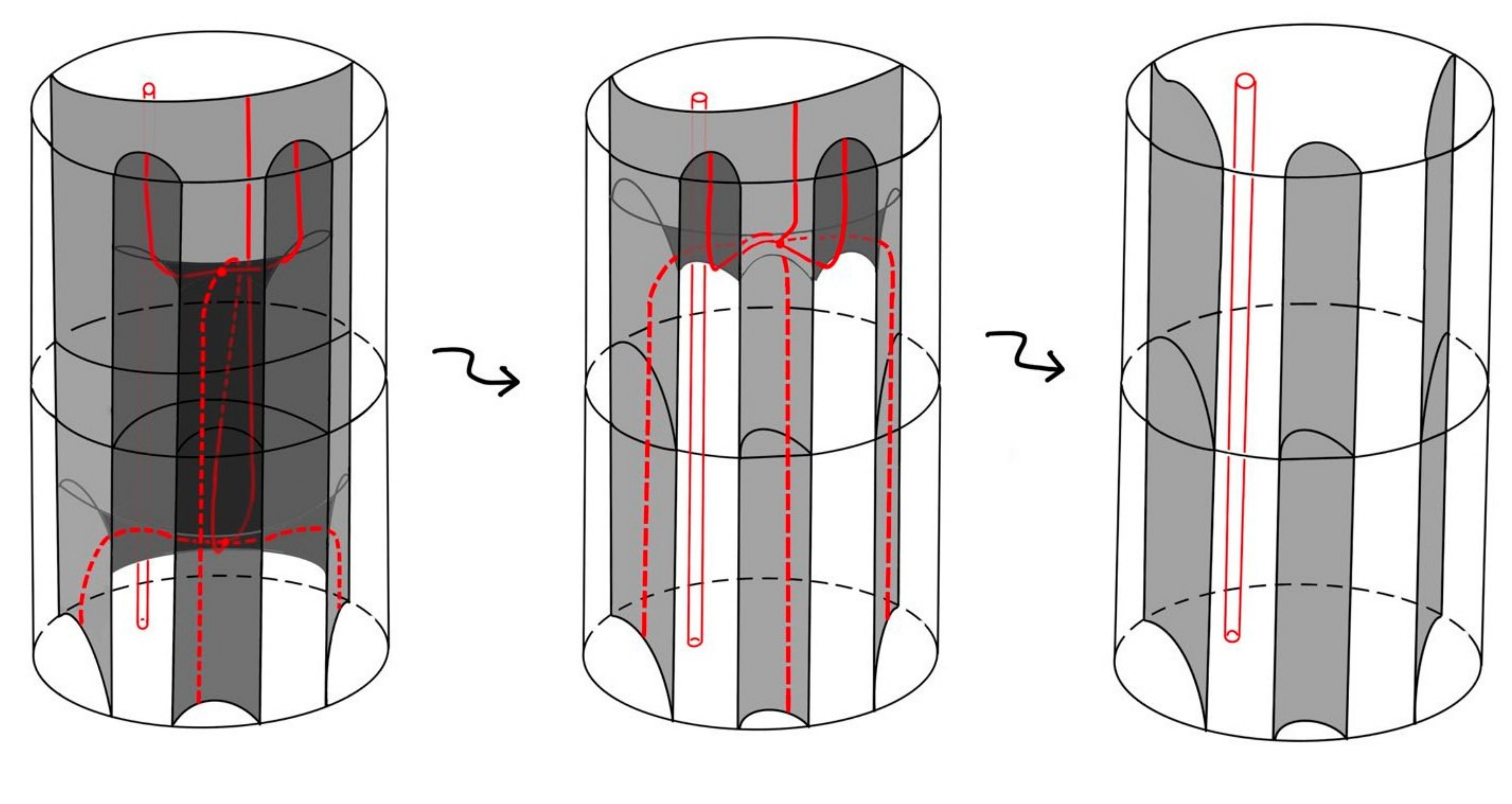}
            \caption{}
            \label{f6}

\end{figure}

\begin{proof}

We first claim that neither $e_1$ nor $e_2$ consists of a 3-valent vertex $v_3$ that marks a critical point of a $6$-disk $S_{6}$, which intersects $A_k^x$ in two arcs. Because, if either $e_1$ or $e_2$ consists such a vertex, then obviously we can isotope $S_{6}$ while moving $v_3$ out of $P_k$  to reduce $|S\cap \mathcal{D}|$, and therefore we can reduce $\mathcal{L}$. We notice that the condition $S$ being incompressible is needed (but not necessary), since otherwise there could be a loop of $\mathcal{G}_S$ that consists of $e_1$ and $e_2$, and $|S\cap \mathcal{D}|$ would not be necessarily reduced.\par

If both $e_1$ and $e_2$ consist of a single vertex $v$, then we have an outermost movable saddle, whose critical point is $v$.  Therefore, by Lemma \ref{arc sum}, we can reduce $\mathcal{L}$. Now we can assume without of loss of generality that, other than $v$, $e_1$ contains only one more vertex $v'$, and $e_2$ does not contain any vertices. Then since $\mathcal{G}_S$ consists no loops, we can reduce the number of vertices by one through the isotopy shown in Figure \ref{f6}. And then we have an outermost movable saddle, so we can reduce $\mathcal{L}$. We notice Figure \ref{f6} only illustrates the situation where both $2n$-disk are away from $\eta(C)$. If one of them intersects $\eta(C)$, we can also have a similar isotopy to reduce $\mathcal{L}$. The situation of either $e_1$ or $e_2$ consists of more than two vertices is similar, since we can reduce the vertex number through the above-mentioned isotopies.
\end{proof}

\begin{proposition}\label{no saddle}
Let $S$ be an incompressible surface properly embedded in standard position with respect to $\mathcal{D}$. Then $S$ has the following properties:\par

1. The subgraphs $\mathcal{G}_S\setminus \mathcal{D}$ are acyclic;\par
2. $S$ contains no movable saddles;\par
3. A $2N$-disk $S_{2N}$, $N\geq 5$, must intersect $C$; \par
4. Any $2$-disk is contained in some short ps-ball $X_j$;\par
5. A 3-valent vertex in a component of $\mathcal{G}_S\setminus \mathcal{D}$ meets edges that intersect with all three annuli on a ps-ball $P_k$. \par

\end{proposition}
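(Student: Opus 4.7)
The plan is to prove the five properties by contradiction, leveraging the minimality of $\mathcal{L}$ (standard position) together with Lemmas \ref{arc sum} and \ref{L}. Property (2) is the pivot: once no movable saddles exist, (3) and (5) become pigeonhole arguments on the three annuli of a ps-ball, while (1) and (4) are independent minimality arguments.

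For (2), suppose $S$ contains a movable saddle $S_{2n}$ inside some $PT_k$, witnessed by an annulus $A^x_k$. Among all such pairs I would select one whose sub-disk on $A^x_k$, cobounded by the outermost two arcs of $S_{2n}\cap A^x_k$ together with a segment of the outer circle of $\partial A^x_k$, is innermost --- containing no other arc of $S\cap A^x_k$. Then $(S_{2n},A^x_k)$ is outermost in the sense of Lemma \ref{arc sum}, which yields an isotopy strictly decreasing $\mathcal{L}$ and contradicts standard position.

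With (2) in hand, property (3) is immediate: a $2N$-disk with $N\geq 5$ avoiding $C$ must lie in some $PT_k$, so its $N$ edges on $\partial E_i\times I_i$ partition among the three annuli $A^1_k,A^2_k,A^3_k$, forcing at least one annulus to receive two arcs. Two disjoint properly embedded arcs on an annulus with all endpoints on the outer boundary circle are parallel, hence form an inner-outer pair --- making the disk a movable saddle, a contradiction. Property (5) runs analogously: the three edges at a $3$-valent vertex $v\in \mathcal{G}_S\cap P_k$ are consecutive boundary arcs of a $6$-disk sharing the endpoint $v$, and if two of them left $P_k$ through the same annulus $A^x_k$ they would form exactly the forbidden configuration of Lemma \ref{L}.

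For (1), a cycle in $\mathcal{G}_S\setminus \mathcal{D}$ is contained in a single ps-ball $P_k$ and realizes a closed loop on $S\cap P_k$ threading through a sequence of $2n$-disks glued at shared critical vertices. Since $P_k$ is a $3$-ball and $S$ is incompressible in $V$, this loop bounds a sub-disk on $S\cap P_k$, and an isotopy along that sub-disk cancels at least one critical point, lowering $\mathcal{L}$. For (4), a $2$-disk inside some $W_i$ records a local extremum of $h_{\partial S}$ in a product region; by pushing that extremum across the appropriate end $E_i\times \{s_i\pm\varepsilon_i\}$ into an adjacent short ps-ball $X_j$, one obtains a local isotopy that does not increase $|S\cap \mathcal{D}|$ or any $|f^{-1}(l)|$ entry and strictly lowers a later coordinate of $\mathcal{L}$, again contradicting minimality. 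The principal technical obstacle throughout is verifying that each constructed isotopy strictly decreases $\mathcal{L}$ in the lexicographic order rather than merely trading arc configurations; keeping each isotopy supported in a small ball so that only one coordinate of $\mathcal{L}$ changes is the cleanest route, particularly in (1) and (4), where the argument is less formulaic than the direct appeal to Lemmas \ref{arc sum} and \ref{L} used for (2), (3), and (5).
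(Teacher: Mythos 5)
Your proposal has a genuine gap in property (3), and it propagates into (5) and (2). You assert that two disjoint properly embedded arcs on an annulus with all endpoints on the outer boundary circle ``are parallel, hence form an inner-outer pair.'' This is false: two such arcs can cut off \emph{disjoint} sub-disks of the annulus, in which case neither is inner or outer relative to the other. This is exactly why the proposition is stated for $N\geq 5$ rather than $N\geq 4$ --- the paper explicitly exhibits an $8$-disk (Figure \ref{f7}(b)) whose four legs avoid $C$ without being a movable saddle, and your pigeonhole argument would ``prove'' the statement for $N=4$ as well, contradicting that example. The same issue undercuts your proof of (5): two edges of a $6$-disk leaving $P_k$ through the same annulus $A^x_k$ need not be nested there, so they need not realize the forbidden configuration of Lemma \ref{L}; the paper instead argues directly that sliding the $3$-valent vertex across the relevant disk of $\mathcal{D}$ reduces $|S\cap\mathcal{D}|$, the first coordinate of $\mathcal{L}$. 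For (2), the paper's route is the reverse of yours: a movable saddle's inner-outer pair lives on an annulus \emph{parallel} to $A^x_k$ inside the product, and Lemma \ref{L} is invoked to show the saddle's legs are \emph{not} nested on the boundary annulus itself, which is what permits the isotopy of Figure \ref{f7}(a) cancelling the critical point and reducing the final coordinate $|\mathcal{D}_{ms}|$. Your plan of selecting an ``innermost'' movable saddle and applying Lemma \ref{arc sum} presupposes a nested pair on $A^x_k$ that Lemma \ref{L} says cannot exist, and even setting that aside, arcs of other components (e.g.\ $4$-disks) lying in the cobounded sub-disk would block the outermostness hypothesis of Lemma \ref{arc sum}.

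Property (4) also has a gap as written: you claim the push into $X_j$ ``strictly lowers a later coordinate of $\mathcal{L}$,'' but no coordinate of $\mathcal{L}=(|S\cap\mathcal{D}|,|f^{-1}(L)|,\dots,|f^{-1}(2)|,|\mathcal{D}_{ms}|)$ counts $2$-disks sitting in product regions, so nothing in the tuple is guaranteed to change under your move. The paper's argument is that a $2$-disk not contained in a short ps-ball forces removable intersections with $\mathcal{D}$, so the \emph{first} coordinate $|S\cap\mathcal{D}|$ drops. Finally, for (1) your statement that the cycle ``bounds a sub-disk on $S\cap P_k$'' is self-defeating: a cycle of $\mathcal{G}_S$ is essential in $S$ (the graph is a deformation retract), so incompressibility plus the fact that the loop lies in the $3$-ball $P_k$ already yields the contradiction with no further isotopy; the extra critical-point-cancellation step is unnecessary and obscures the (correct) one-line argument the paper intends.
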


\begin{proof}

Property 1 is obvious, since otherwise it would contradict the condition $S$ is incompressible.\par

Assume $S$ is in standard position. Let $P_k$ be a ps-ball, $S_{2n}$ be an movable saddle which consists of a vertex $v$ of $\mathcal{G}_S$ in its interior. Because $\mathcal{G}_S\cap P_k$ consists of no loops, by Lemma \ref{L}, any two edges $e_1$ and $e_2$ which intersect at a vertex $v$ must also intersect each annulus $A_k^x \subset \partial P_k$ in two arcs that do not has an inner-outer relation. Therefore we can isotope $S\cap P_k$ to reduce the critical point number, and reduce $|\mathcal{D}_{ms}|$, see Figure \ref{f7}(a). We have shown that property 2 is true. \par

\begin{figure}[h!]
            \centering
            \includegraphics[width=12cm]{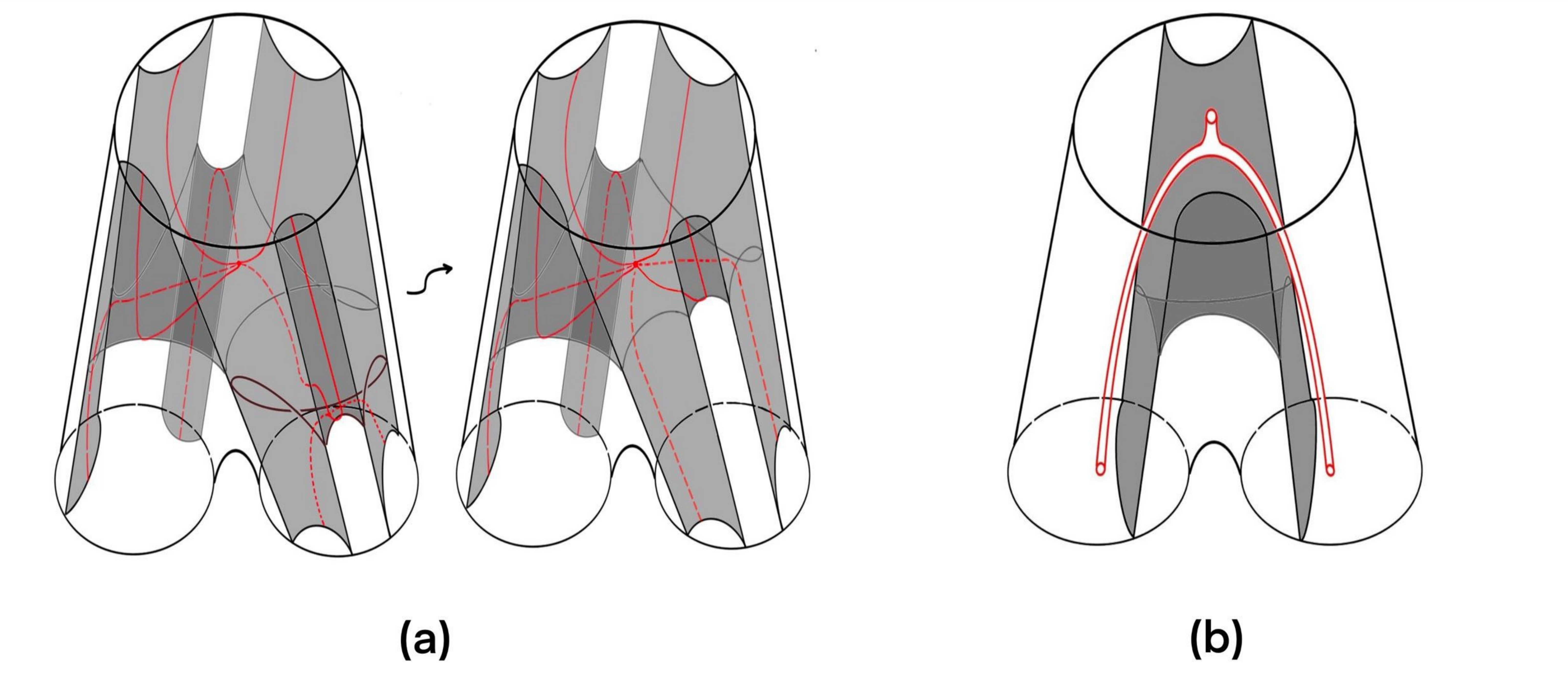}
            \caption{}
            \label{f7}
\end{figure}

To see property 3, we assume a $2n$-disk $S_{2n}$, $n=4, 6$, ... , $2k$, consists of a critical point in its interior, and $S_{2n}$ does not intersect $C$. Then $S_{2n}$ must be a movable saddle, unless it is a $8$-disk shown as in Figure \ref{f7}(b). \par

To see property 4, we assume by contradiction that, if there exists a $2$-disk in some $W_i$ which does not intersect $\mathcal{D}$, we can always isotope $S$ so the $2$-disk is removed form $W_i$. Then, if the $2$-disk were in a product $W_i$ that intersects with $\mathcal{D}$,  $|S\cap \mathcal{D}|$ would be reduced by the isotopy that removes the $2$-disk from $W_i$. Therefore the $2$-disk must be embedded in some short ps-ball $X_j$.\par

To see property 5, if two edges of $\mathcal{G}_S\setminus \mathcal{D}$ that meet at a 3-valent vertex $v_3$ in $P_k$ both intersect with an annulus $A_k^x$, then apparently we can reduce $|S\cap \mathcal{D}|$ through an isotopy of $S$ while moving $v_3$ out of $P_k$.
\end{proof}

As a result, it is reasonable to give the following definition: 

\begin{definition}
Let $S$ be a pseudo-essential surface properly embedded in $V$. If $S$ is in standard position, and there exists a component of $\mathcal{G}_S\setminus \mathcal{D}$ that consists of a cyclic graph, then we say $\mathcal{G}_S$ is \emph{trivial}. Otherwise, we say $\mathcal{G}_S$ is \emph{nontrivial}. Additionally, if a saddle contained in $S$ is not a movable saddle, we call it an \emph{essential saddle}.
\end{definition}

\section{Compressing disks in Normal Position} \label{C5}

Due to Theorem \ref{essential}, any incompressible surface $S$ properly embedded in $V$ can be put in standard position, and according to Proposition \ref{no saddle}, when $S$ is in standard position, it has no movable saddles. But the contrary is not necessarily true. We want to study the necessary and sufficient conditions for a compressing disk to exist when $S$ is in standard position, so that we can determine incompressibility of $S$.  \par

We say a properly embedded arc $\beta$ in $S$ is a \emph{horizontal arc}, if for a horizontal disk $D$, $\beta \subset S\cap D$; We say $\beta$ is an \emph{essential arc} provided $\beta$ cannot be deformed (fixing $\partial b$) in $S$, into $\partial S$; Otherwise we say it is an \emph{inessential arc}. \par

\begin{definition} \label{polygon}
A sub-disk $p_{D_c}$ in $D_c$ with $p_{D_c} \cap \{S_k\cup \mathcal{D}\}=\partial p_{D_c}$ is called a \emph{polygon} if it satisfies the following conditions: \par
1. The boundary $\partial p_{D_c}$ of each polygon $p_{D_c}$ in $D_c$ intersects each essential horizontal arc $\beta$ at most once, and $D_c$ does not transversely intersect any inessential horizontal arcs in one point; \par
2. For each nontrivial $2n$-disk $S_{2n}$ that intersects $p_{D_c}$, where $n=2, 3, 4, 6$, ... ,$2k$, $D_c \cap S_{2n}$ is a properly embedded arc in $\partial D_c$ which consists of a single critical point of $h|_{\partial D_c}$ that is in a small neighborhood of the critical point in $S_{2n}$; \par
3. A bi-gon $p_2$ has to intersect an essential saddle; \par
4. A polygon $p_{2m}$ is a non-degenerate saddle disk if and only if there exists 4 edges (arranged in order, if we orient $\partial p_{2m}$) of $p_{2m}\cap \mathcal{D}$ that intersect with disks of different heights in $\mathcal{D}$ alternatingly.  \par

\end{definition}

\begin{definition}\label{definition of disk}
Let $S\subset V$ be a compressible surface in standard position that contains no movable saddles. We use $N_{m}$ to denote the total number of $2m$-gons, for all $m\geq 4$. If $S$ has no movable saddles, we say a disk $D_c$ properly embedded in the complement of $S$ is in \emph{normal position}, if $D_c$ intersects each $P_k$ in polygons, and the $2$-tuple\par

\begin{center}
 $\mathcal{L}'=(|D_c\cap \mathcal{D}|, N_{m})$
\end{center}

is minimized in the lexicographic order.\par

\end{definition}

\begin{remark}
The second tuple $N_{m}$ in the above definition is minimized to reduce similar situations in Figure \ref{f9}(d). In which, it shows such a reduction through an isotopy that is the ``inverse'' of a $\partial$-compression induced by the compression along the green disk.

\end{remark}

\begin{proposition}\label{normal}

Let $S$ be a surface properly embedded in standard position with respect to $\mathcal{D}$, $D_c$ be a compressing disk of $S$. Assume $\mathcal{G}_S$ is nontrivial, then $D_c$ can replaced by a union of disk(s) $\{D^{(l)}_c\}$ , so that each $D^{(l)}_c$ is a compressing disk of $S$ in normal position. \par

\end{proposition}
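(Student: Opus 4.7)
The plan is to minimize the tuple $\mathcal{L}'=(|D_c\cap\mathcal{D}|, N_m)$ through a sequence of standard simplifications of $D_c$, using innermost circle and outermost arc arguments together with occasional surgeries along arcs of $D_c\cap\mathcal{D}$. Whenever a surgery splits $D_c$ into two pieces, I would keep only those components whose boundary remains essential on $S$, producing the claimed family $\{D_c^{(l)}\}$.

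First, I would put $D_c$ in general position with $\mathcal{D}$, $S$, and the height function $h$. Circles in $D_c\cap\mathcal{D}$ can be eliminated by the usual innermost argument: an innermost circle bounds a subdisk of $D_c$ that permits an isotopy of $D_c$ across the corresponding disk of $\mathcal{D}$, strictly decreasing $|D_c\cap\mathcal{D}|$. Next I would eliminate circles in $D_c\cap S$ other than $\partial D_c$: an innermost such circle $\alpha$ bounds a subdisk $D''\subset D_c$; if $\alpha$ is essential on $S$ I replace $D_c$ by $D''$, and if $\alpha$ is inessential I push $D_c$ across the disk $\alpha$ bounds on $S$. After these reductions $D_c$ meets $\mathcal{D}$ and $S$ (outside $\partial D_c$) only in arcs, and the arcs of $D_c\cap\mathcal{D}$ cut $D_c$ into sub-disks, each embedded in some $P_k$. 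I would then verify conditions (1)--(4) of Definition \ref{polygon} piece by piece: a violation of (1) produces a bigon between $\partial D_c$ and a horizontal arc that can be pushed off, strictly reducing $|D_c\cap\mathcal{D}|$; (2) follows from a local straightening of $\partial D_c$ near each critical point of $h_S$ or $h_{\partial S}$ on the boundary of the relevant $2n$-disk; and (4) is enforced by the minimality of $N_m$ via the reverse $\partial$-compression pictured in the remark following Definition \ref{definition of disk}.

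The subtle condition is (3). If a bi-gon $p_2$ contains no essential saddle, then one of its edges lies on a component of $\mathcal{D}$ and can be used as a surgery arc to cut $D_c$ into two sub-disks, each with strictly fewer intersections with $\mathcal{D}$. The main obstacle will be showing that at least one of the two pieces is again a compressing disk of $S$; if both boundary circles were null-homotopic on $S$, then so too would $\partial D_c$ have been, contradicting that $D_c$ was a compressing disk. The hypothesis that $\mathcal{G}_S$ is nontrivial is used precisely here to rule out the degenerate configuration in which a cyclic component of $\mathcal{G}_S\setminus\mathcal{D}$ would allow $\partial D_c$ to bound a disk on $S$ inside that cycle, so that the surgery cannot convert a genuine compression into a pair of trivial disks. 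Finally, the lex-minimization terminates in finitely many steps because $|D_c\cap\mathcal{D}|$ and $N_m$ are nonnegative integers, and at termination every surviving disk is a compressing disk of $S$ in normal position.
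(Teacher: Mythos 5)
Your proposal has the right skeleton (general position, innermost-circle removal, lexicographic minimization of $\mathcal{L}'$, surgeries that split $D_c$ with at least one piece surviving as a compressing disk), but the two steps that carry the real content are mishandled. First, the surgery in the paper is attached to condition (1), not condition (3): the essential case is when two \emph{distinct} edges of a single polygon $p_{D_c}$ each meet the same essential horizontal arc $\beta$ once. Since $\beta$ is essential in $S$, there is no bigon to ``push off'' and no isotopy reducing $|D_c\cap\mathcal{D}|$; instead one takes the horizontal disk $D$ containing $\beta$, lets $\gamma=D\cap p_{D_c}$ (an arc with both endpoints on $\beta$), and surgers $D_c$ along the disk $D_{\beta\gamma}$ cobounded by $\beta$ and $\gamma$, splitting $D_c$ into $D'_c$ and $D''_c$, at least one of which is still a compressing disk (else $D_c$ was trivial). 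Your treatment of (1) by isotopy alone misses exactly the configuration that forces the statement to read ``a union of disk(s)''.

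Second, your mechanism for condition (3) --- surgering along an edge of a bi-gon lying on $\mathcal{D}$ --- does not work as stated: every polygon has edges on $\mathcal{D}$, and such a cut neither reduces $|D_c\cap\mathcal{D}|$ nor bears on whether the bi-gon meets an essential saddle. In the paper, (3) is not enforced by an operation at all; it is a \emph{consequence} of (1), (2), and property 5 of Proposition \ref{no saddle}: a $6$-disk must meet all three disks of $\mathcal{D}$ on a ps-ball and a critical $4$-disk must meet two, so the sub-arc of a bi-gon containing its local extremum cannot lie on a $4$- or $6$-disk and must therefore meet a $2N$-disk with $N\geq 4$, i.e.\ an essential saddle. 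Relatedly, you misplace the hypothesis that $\mathcal{G}_S$ is nontrivial: it is not needed to keep the surgery from yielding two trivial pieces (two trivial pieces would reassemble to a trivial $D_c$), but rather to guarantee that the surviving compressing disk still has nonempty intersection with $\mathcal{D}$, since a compressing disk disjoint from $\mathcal{D}$ would force a cyclic component of $\mathcal{G}_S\setminus\mathcal{D}$. Finally, your one-line appeal to minimality of $N_m$ for condition (4) skips the ``only if'' direction, for which the paper constructs explicit isotopies removing the interior critical point when the alternating-height condition on the edges fails.
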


\begin{figure}[h!]
\centering
  \includegraphics[width=13cm]{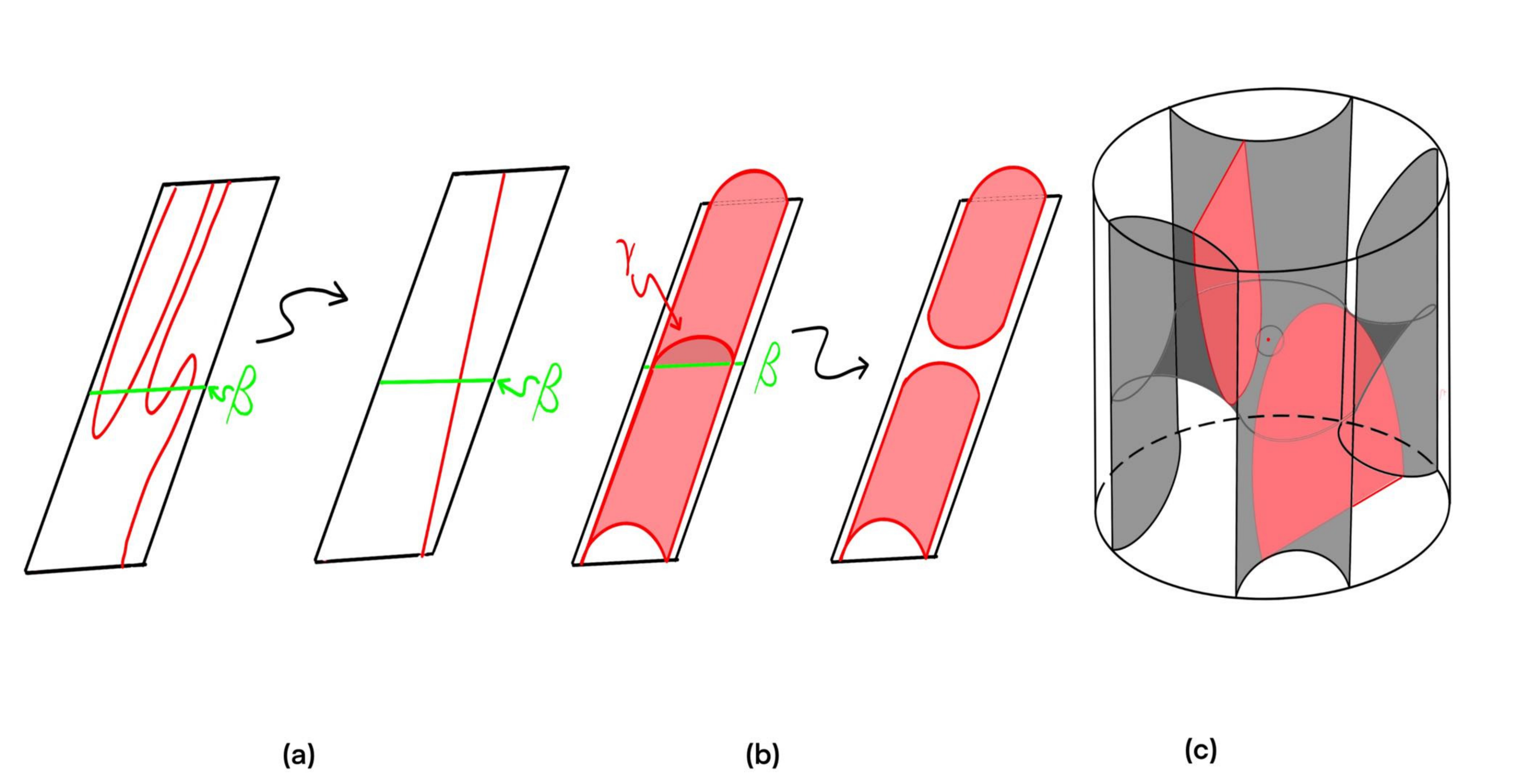}
  \caption{} \label{f8}
\end{figure}

\begin{figure}[h!]
\centering
  \includegraphics[width=16cm]{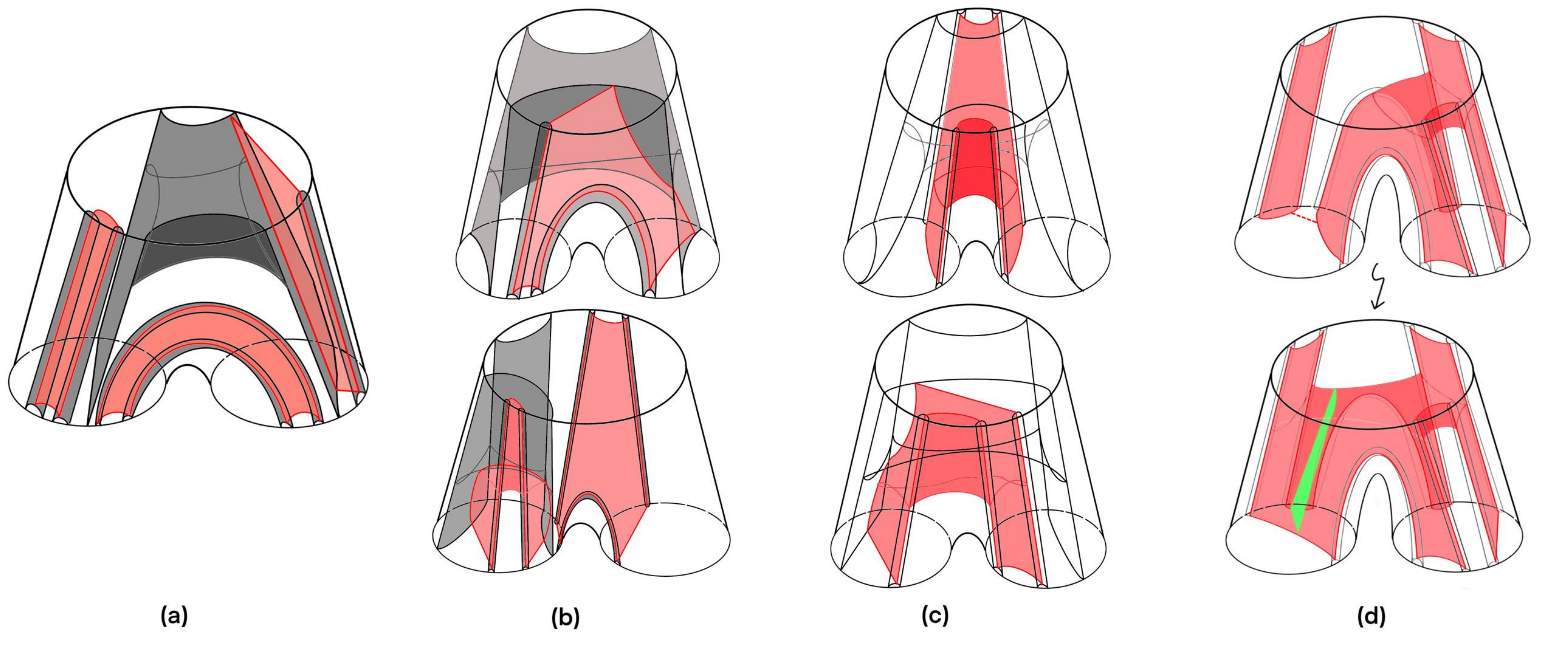}
  \caption{} \label{f9}
\end{figure}

\begin{proof}

Suppose a compressing disk $D_c$ for $S$ is placed so that $|D_c \cap \mathcal{D}|$ is minimized. $D_c \cap \mathcal{D}$ consists of arc(s), since $D_c$ is a disk any loops can be isotoped-off by a normal innermost-loop argument. \par

Let $\beta$ denote the essential horizontal arc intersects with $\partial p_{D_c}$. We assume there exists an edge $\alpha \subset \partial p_{D_c}$ intersecting $\beta$ more than once, then we can deform $\partial p_{D_c}$ on $S$, as shown in Figure \ref{f8}8(a), so that $\alpha$ intersects each horizontal arc no more than once, otherwise $|D_c \cap \mathcal{D}|$ can be reduced. Therefore each edge of $\partial p_{D_c}$ must intersect $\beta$ for at most once. We assume there exists two edges of a polygon $p_{D_c}$ intersecting $\beta$. Then for the horizontal disk $D$ containing $\beta$, we have $D \cap p_{D_c}=\gamma$ is an arc whose boundary points are both contained in $\beta$. Without loss of generality, we assume the disk $D_{\beta \gamma}$ bounded by $\beta$ and $\gamma$ does not intersect $D_c$ in its interior, then we can perform a surgery along $D_{\beta \gamma}$, shown in Figure \ref{f8}8(b). The resultant of this surgery performed on $D_c$ is a disjoint union of two disks, that we denote as $D'_c$ and $D''_c$. At least one of $D'_c$ and $D''_c$ is still a compressing disk, otherwise the original disk $D_c$ would be trivial. We ignore the trivial disk(s) and consider the resultant compressing disk(s) after the surgery, and we assume without loss of generality that $D'_c$ is a compressing disk. Then we can deform $D'_c$ to strictly reduce the intersection number $|S \cap \mathcal{D}|$, until it is minimized.  $D'_c$ still has nonempty intersections with $\mathcal{D}$, otherwise it would contradict the assumption that $\mathcal{G}_S$ is nontrivial. Due to the compactness of $S$ and the disks, we can repeat the above procedure for finite many times on each of the resultant compressing disks and ignore the trivial disks, and then the resultant disks are a union of compressing disk(s) $\{D^{(l')}_c\}$ each satisfying condition 1.\par

Consider each of the polygons $D^{(l')}_c\cap P_k$. We apply the notation $p_{D_c}$ to represent such a polygon for convenience. The construction of $S$ in standard position, and condition 1 implies we can deform the boundary of $p_{D_c}$, so that there is only one critical point $c$ in an edge of $p_{D_c}\cap S$, so that $h(c)$ is either the local maximal or the local minimum. We can also alter $\partial p_{D_c}$ so that $c$ is in a small neighborhood of the critical point in $S_{2n}$, to satisfy condition 2. For the rest of this proof we assume condition 1 and 2 are always satisfied by any compressing disks mentioned below.\par

According to property 5 in Proposition \ref{no saddle}, a $6$-disk  has to intersect three disks in $\mathcal{D}$. And a $4$-disk that contains a critical point has to intersect two disks of $\mathcal{D}$. Therefore, the sub-arc of a bi-gon which contains a local extreme point cannot intersect with a $4$-disk or a $6$-disk. Thus, we have shown that if condition 1 and 2 are guaranteed, then condition 3 has to be satisfied by each compressing disk $D^{(l')}_c$.  See Figure \ref{f8}8(c) for an example of the intersection of two bi-gons with a $12$-disk in some product $W_i$.  \par

It is easy to see the ``if'' part, since similar to the proof of Theorem \ref{essential}, we can isotope $p_{2m}$ in this situation so that it is a non-degenerate saddle disk. See Figure \ref{f9}(c) for two examples of a octagon, and see the lower picture in Figure \ref{f9}(d) for an example of a $16$-gon, in which there exists $6$ edges intersect with disks of different heights in $\mathcal{D}$ alternatingly. To see the ``only if'' part, we suppose there are no such edges which satisfy the assumptions in condition 4. Then if $m\leq 3$, we can place $p_{2m}$ so that it is not a non-degenerate saddle. See Figure \ref{f9}(a), (b). If $m\geq 4$, we can always isotope $p_{2m}$ so that it consists of critical point(s) of $h_{\partial D_c}$ on its edges in  $p_{2m}\cap S$, but it does not consist of any critical points of $h_{D_c}$ in its interior. See the upper picture in Figure \ref{f9}(d) for such an example.
\end{proof}

The above proposition allows us to divide a compressing disk, so that it can be replaced by a union of compressing disks in normal position. Consider $D_c$ a disk embedded in the complement of $S$, so that $|D_c\cap \mathcal{D}|$ is minimized, and $D_c$ intersects each ps-ball $P_k$ in polygons. Each polygon cosists of two type of boundary edges (arcs): The set consisting all arcs of $D_c\cap \mathcal{D}$ will be denoted as $\mathcal{E}_{\mathcal{D}}$. And, the set consisting all arcs of $D_c\cap S-\mathcal{D}$ will be denoted as $\mathcal{E}_S$.  Moreover, we use $n_2$, $n_4$, ... ,$n_{2m}$, where $2m$ is an even integer, to denote the number of bi-gons, rectangles, ..., and $2m$-gons in all $D_c\cap P_k, 1\leq k \leq 2g-2.$ We give an Euler characteristic calculation for the polygons numbers in a compressing disk.\par

Let $\mathcal{P}_{2m}$ denote the set of distinct $2m$-gons, so that in each $\mathcal{P}_I, I=2,4,...,2m$, any two of the polygons are not parallel to each other; Let $\mathcal{P}^J_I$ denote the subset of $\mathcal{P}_I$ including all the $I$-gons that share common edge(s) with some $J$-gon(s). If we can choose elements from $\mathcal{P}^J_I, I,J=2,4,...,2m,$ (possibly identical and parallel to each other) to glue a connected embedded surface $S_D$, so that $\partial S_D$ only consists of edges in $\mathcal{E}_{\mathcal{D}}$, and the number of polygons and their edges in $S_D$ satisfy the a linear equation set, we simply say some elements in $\mathcal{P}^J_I, I,J=2,4,...,2m$, satisfy the equation set.
\par

\begin{lemma} \label{iff}

Let $S$ be a surface properly embedded in standard position with respect to $\mathcal{D}$. Suppose $S$ has no movable saddles, and $\mathcal{G}_S$ in nontrivial. Then $S$ is compressible if and only if some elements in $\mathcal{P}^J_I, I,J=2,4,...,2m,$ satisfy the following equation set:\par

\begin{equation}\label{equation}
    \begin{cases}
      \frac{1}{2}(n_2+(-1)\cdot n_6+...+(2-m)\cdot n_{2m})=1\\
      n_2+(2)\cdot n_4+...+(m)\cdot n_{2m}=|\mathcal{E}_{\mathcal{D}}|=|\mathcal{E}_S|\\
    \end{cases}
\end{equation}

\end{lemma}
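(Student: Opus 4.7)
The plan is to establish the two implications separately, with Proposition \ref{normal} doing the heavy lifting by reducing any compressing disk to a union of compressing disks in normal position.

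For the forward implication, I would assume $S$ is compressible and apply Proposition \ref{normal} to obtain a compressing disk $D_c$ in normal position. The partition of $D_c$ by $\mathcal{D}$ decomposes it into polygons lying in the various ps-balls, and the connectivity of $D_c$ guarantees that every polygon shares at least one $\mathcal{D}$-edge with an adjacent polygon, so each polygon belongs to some $\mathcal{P}_I^J$. The first equation is then obtained from $\chi(D_c)=1$: with $F=\sum n_{2k}$ faces, each $2k$-gon contributing $k$ interior $\mathcal{D}$-edges (each shared by two polygons) and $k$ boundary $S$-edges, and all vertices lying on $\partial D_c$ where exactly two polygons meet, a direct count yields $\sum n_{2k}-\frac{1}{2}\sum k\,n_{2k}=1$, which rearranges to the stated first equation. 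The second equation simply records the polygon-edge incidence balance between $\mathcal{E}_{\mathcal{D}}$ and $\mathcal{E}_S$, with a $2k$-gon contributing $k$ to each side.

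For the backward implication, I would start from a collection of polygons chosen from the $\mathcal{P}_I^J$'s satisfying the equation set and glue them along matching $\mathcal{D}$-edges using the $\mathcal{P}_I^J$ adjacency data. The edge-balance equation guarantees the gluing is combinatorially consistent, while the Euler-characteristic equation forces the glued surface $S_D$ to be an abstract disk. To realize $S_D$ as an embedded compressing disk in $V$, I would use that each polygon already sits embedded in its respective $W_i$ or $X_j$, so that matchings along $\mathcal{D}$-edges can be carried out inside the product structure of the $W_i$'s after a small isotopy if needed. It then remains to argue that $\partial S_D$ is essential in $S$: if $\partial S_D$ bounded a disk on $S$, the resulting 2-sphere would, by the minimality of $|D_c\cap\mathcal{D}|$ together with the structural properties of Proposition \ref{no saddle}, propagate back into a cycle in some component of $\mathcal{G}_S\setminus \mathcal{D}$, contradicting the nontriviality of $\mathcal{G}_S$.

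The principal obstacle I expect is the backward direction, specifically the step of upgrading the abstract polygon data to a genuinely embedded compressing disk rather than an immersed or trivial one. I would address this by exploiting the rigidity of the polygon types (each is determined up to isotopy by its boundary pattern on $\partial W_i$ or $\partial X_j$, a direct consequence of Definition \ref{polygon} conditions 2 and 4), so that consistent combinatorial matchings prescribed by the $\mathcal{P}_I^J$ data give rise to an unambiguous geometric gluing. Ruling out a trivial boundary curve is the other delicate point, and there I expect the nontriviality of $\mathcal{G}_S$ combined with the reduction arguments used in Lemma \ref{L} to be the precise tool required.
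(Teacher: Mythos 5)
Your proposal follows essentially the same route as the paper: the forward direction is Proposition \ref{normal} followed by the identical Euler-characteristic and edge-incidence count on the polygon decomposition of a normal-position compressing disk, and the backward direction glues the selected polygons, uses $\chi=1$ to force a disk, and then rules out triviality of that disk. The only real divergence is in that last step --- the paper excludes a trivial $D_c$ by observing that pushing it onto $S$ would create, near a local extreme point, a polygon edge meeting $\mathcal{D}$ twice in violation of condition 1 of Definition \ref{polygon}, whereas you appeal to the nontriviality of $\mathcal{G}_S$; and the embedding concerns you raise are sidestepped in the paper because its convention that elements ``satisfy the equation set'' already presupposes they glue to a connected \emph{embedded} surface $S_D$.
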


\begin{proof}

If some elements in $\mathcal{P}^J_I, I,J=2,4,...,2m,$ satisfy equation set (\ref{equation}), we can find a disk $D_c$ in the complement of $S$, since in a handlebody the only connected, embedded surface with euler characteristic equal to one must be a disk. We claim $D_c$ is a compressing disk of $S$. Because if we assume $D_c$ is a trivial disk, then we can isotope $D_c$ with $\partial D_c$ fixed on $S$ to a disk $D_S\subset S$ bounded by $\partial D_c$. But this cannot be true, because near the local extreme point of such a disk, there has to exist an edge of a polygon that intersects $\mathcal{D}$ twice, which would contradict condition 1 of Proposition \ref{normal}. We notice that the compressing disk $D_c$ glued by such elements in $\mathcal{P}^J_I, I,J=2,4,...,2m,$ satisfying equation set (\ref{equation}) is not necessarily in normal position.\par

By Proposition \ref{normal}, if $S$ is compressible, then there exists a compressing disk $D_c$ in normal position. $D_c$ will be divided by arcs of $D_c \cap \mathcal{D}$ into polygons. We use $\mathcal{V}_{D_c}$, $\mathcal{E}_{D_c}$, and $\mathcal{F}_{D_c}$ to denote the vertex number, edge number, and face number, respectively, in a compressing disk $D_c$. Then since the pull-back graph on $D_c$ of $D_c\cap \mathcal{D}$ is 3-valent, we have $\chi(D_c)= \mathcal{V}_{D_c}-\mathcal{E}_{D_c}+\mathcal{F}_{D_c}=\frac{1}{2}(2n_2+4n_4+6n_6+...+2m\cdot n_{2m})-\frac{3}{2}(n_2+2n_4+3n_6+...+m\cdot n_{2m})+(n_2+n_4+n_6+...+ n_{2m})=\frac{1}{2}(n_2+(-1)\cdot n_6+...+(2-m)\cdot n_{2m})=1$. Half the edge number in the pullback graph of $D_c$ is equal to $\frac{1}{2}(2n_2+4n_4+6n_6+...+2m\cdot n_{2m})=|\mathcal{E}_{\mathcal{D}}|=|\mathcal{E}_S|$. This is because each polygon has half of its edges in the interior of $D_c$, which will intersect $\mathcal{D}$ in $|\mathcal{E}_{\mathcal{D}}|$ arcs; And each polygon has the other half of its edges in $\partial D_c$, which will intersect $S$ in $|\mathcal{E}_S|$ arcs. 
\end{proof}

Let $C_{2x}$ denote the number of $2x$-disks that contains a critical point. We now prove a technical lemma before we state the algorithm mentioned in Theorem \ref{THM1}.\par

\begin{lemma}\label{finiteness}
Let $S$ be a surface properly embedded in standard position with respect to $\mathcal{D}$, so that $S$ consists of no movable saddles. Suppose $D_c$ is a compressing disk in standard position, then the amount of bi-gons contained in $D_c$ which are parallel and considered the same element in $\mathcal{P}_2$ is bounded by $\mathcal{N}_C=\frac{1}{2}\sum_{x=3}^{N}{x\cdot C_{2x}}$, where $2N$ is the largest possible number for the edges of a $2n$-disk.
\end{lemma}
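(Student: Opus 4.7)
The plan is to associate to each bi-gon in a family of mutually parallel bi-gons a ``slot'' at the critical point of a $2x$-disk with $x\geq 3$, and to bound the number of such slots. Fix a family $\{p_2^{(1)},\ldots,p_2^{(k)}\}$ of pairwise parallel bi-gons in $D_c$ that all represent a single element of $\mathcal{P}_2$; the goal is to show $k\leq\mathcal{N}_C$.

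The first step is to localize each bi-gon. By condition 3 of Definition \ref{polygon}, every bi-gon $p_2^{(j)}$ intersects an essential saddle; by condition 2, its $S$-edge $p_2^{(j)}\cap S$ lies on some $2x_j$-disk $S_{2x_j}$ (with $x_j\geq 3$) and consists of a single critical point of $h|_{\partial D_c}$ placed in a small neighborhood of the critical point of $S_{2x_j}$. Consequently each bi-gon of the family ``hugs'' the critical point of some $2x$-disk carrying a critical point.

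The second step is a local sector count at each such critical point. At the critical height $S_{2x}$ meets the horizontal disk in $x$ arcs crossing at one point, dividing a small disk neighborhood of the critical point on $S_{2x}$ into $2x$ sectors. The $S$-edge of a hugging bi-gon lies in one such sector, using a pair of adjacent arcs as its endpoints. Embeddedness of $D_c$ together with the stacked nature of a parallel family forces any two bi-gons of the family that hug the same critical point to occupy distinct pairs of arcs; this gives at most $x/2$ bi-gons per critical point of a $2x$-disk. Summing over all critical-point-containing $2x$-disks yields
\[
k\leq\sum_{x=3}^{N}\frac{x}{2}\,C_{2x}=\frac{1}{2}\sum_{x=3}^{N}x\cdot C_{2x}=\mathcal{N}_C.
\]

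The principal obstacle will be the disjointness argument in the second step: one must rigorously show that the pair of arcs occupied by any bi-gon of the parallel family is uniquely determined and distinct from that of other members, including in the case where the family passes by several distinct critical points and distributes its bi-gons among several $2x$-disks. This should follow from the embeddedness of $D_c$ plus the rigidity of the ``stacked'' parallel structure, but needs careful case analysis. A secondary technical point concerns the $x=3$ contribution: a $6$-disk carries an $h_{\partial S}$-critical point rather than an $h_S$-critical point and is not a saddle in the sense of Definition \ref{2ndisk}(5), so one must separately verify that its local arc structure still supports the same sector count and that the parallelism of bi-gons hugging such a disk is compatible with the rest of the argument.
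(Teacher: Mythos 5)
Your approach is genuinely different from the paper's, and its key step does not work. You try to bound the size of a parallel family of bi-gons by a \emph{local} count at the critical points they hug: each bi-gon occupies a pair of adjacent arcs in the sector decomposition near the critical point of a $2x$-disk, and you assert that distinct members of the family must occupy distinct pairs of arcs, giving at most $x/2$ per critical point. But two bi-gons that are parallel --- i.e., that represent the \emph{same} element of $\mathcal{P}_2$ and cobound a product region in the complement of $S\cup\mathcal{D}$ --- necessarily sit in the \emph{same} sector at the \emph{same} critical point, nested one inside the other; if they occupied distinct pairs of arcs they would not be parallel in the first place. So the local picture near the hugged critical point places no bound at all on the size of a parallel family: arbitrarily many nested parallel sheets are locally consistent with embeddedness of $D_c$. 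You flag this disjointness claim yourself as the ``principal obstacle,'' and it is not merely unproven --- it is false for parallel bi-gons, which is exactly the case the lemma is about.

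The paper's argument is global rather than local. Each parallel bi-gon carries a local extreme of $h$ restricted to $D_c$, and since $D_c$ is a single connected embedded disk, any two parallel copies must be joined inside $D_c$ through a chain of polygons, at least one of which is a saddle polygon containing a critical point. Condition 1 of Definition \ref{polygon} forbids a single polygon from having two of its edges on the same $2n$-disk, so each additional parallel copy forces the connecting saddle polygons to consume fresh edge-capacity on the critical-point-bearing $2x$-disks; totalling that capacity over all such disks yields $\mathcal{N}_C=\frac{1}{2}\sum_{x=3}^{N}x\cdot C_{2x}$. If you want to salvage your write-up, the sector decomposition can stay as bookkeeping, but the counting must be transferred from the bi-gons themselves to the saddle polygons of $D_c$ that separate consecutive parallel sheets.
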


\begin{proof}

We consider two identical elements in $\mathcal{P}_2$, i.e., two bi-gons, $p_2$ and $p'_2$ that are parallel to each other. Assume $p_2$ and $p'_2$ are both sub-disks of a compressing disk $D_c$ in normal position. Then in $D_c$ there must exist a polygon $p_{2m}$ that is path-connected to both $p_2$ and $p'_2$. Moreover, $p_{2m}$ must consists of a critical point, because $p_2$ and $p'_2$ both consist of a local maximum or local minimum point of $D_c$. But this implies there must exist at least one $2N$-disk $D_{2N}$ which consists of a critical point, such that $D_{2N}$ also intersects with two polygons $p_a$ and $p_b$. And each of $p_a$, $p_b$ is path-connected to either $p_2$ or $p'_2$ in $D_c$. Because otherwise, $p_{2m}$ would be a polygon that consists of a critical point, and two edges of $p_{2m}$ both intersect with one $2n$-disk $D_{2N'}$. This leads to a contradiction due to property (1) of Proposition \ref{polygon}. Suppose there are $x_0$ parallel polygons, it would require at least one pair of $2n$-disks with $2x_1,2x_2$ edges, where $x_1,x_2 \geq 4$, $2x_0\leq x_1\leq x_2 \leq N$ (or $x_1=3$, $2=x_0< x_1\leq x_2 \leq N$), so that it would not lead to the similar contradiction mentioned above. Therefore the possible times that an element in $\mathcal{P}_2$ being selected is bounded by $\frac{1}{2}\sum_{x=3}^{N}{x\cdot C_{2x}}$.
\end{proof}
\par

\subsection*{\small{The Algorithm}}\label{Algorithm}

For any pseudo-essential surface $S$, we have the start from step $I$ of the algorithm:\par

$I$ According to Proposition \ref{essential}, $S$ can be isotoped so that each of its disk component is as described in Definition \ref{2ndisk}. Then we proceed to $II$.

$II$. We reducing the complexity $(|S\cap \mathcal{D}|,|f^{-1}(L)|,|f^{-1}(L-1)|,...,|f^{-1}(2)|)$ in lexicographical order through the isotopies described in $\S$\ref{C4}. Then we minimize the complexity $\mathcal{L}$ by reducing movable saddles, so that $S$ is in standard position. According to property 1 of Proposition \ref{no saddle}, if we find a loop of $\mathcal{G}_S$ in some $P_k$, we conclude that $S$ is compressible, and this algorithm finishes; Otherwise, according to the proof of Lemma \ref{L} and the proof of property 2 in Proposition \ref{no saddle}, $S$ can be isotoped so it contains no movable saddle, and we proceed to $III$.

$III$. By Lemma \ref{iff} we know that we can determine the compressibility or incompressibility of $S$ by confirming whether there exists a compressing disk in normal position, through examining whether the elements in $\mathcal{P}^J_I, I,J=2,4,...,2m,$ that satisfy equation set (\ref{equation}). According to property 3 of Proposition \ref{normal}, a bi-gon can only intersects with a $2n$-disk, where $n\geq 4$. Otherwise we can reduce $|S\cap \mathcal{D}|$. 

$III.(a)$ Consider $\mathcal{P}_2$, if it is empty, then apparently we can conclude that there does not exist any compressing disks of $S$. We conclude that $S$ is incompressible, and this algorithm finishes; Otherwise we proceed to $III.(b)$

$III.(b)$ If $\mathcal{P}_2$ is nonempty, we suppose $S$ is compressible. There must be at least a pair of bi-gons in the selection. Each element in $\mathcal{P}_2$ can be selected for at most $\mathcal{N}_c$ times due to Lemma \ref{finiteness}. If two elements in $\mathcal{P}_2$, say $b_1$ and $b_2$, intersect each other, then $b_1$ and $b_2$ cannot be both selected, because the surface glued would not be an embedded surface. Therefore we can examine all the possible selections of bi-gons, and check the polygons connected to them, (and the polygons connected to those mentioned above ...), to see of there exists a selection of polygons which satisfies equation set (\ref{equation}). If the equation set has a solution, then we conclude that $S$ is compressible, otherwise $S$ is incompressible. \par

We notice that the algorithm is finite and it always concludes, since $n_2$ is bounded by $|\mathcal{P}_2|\times \mathcal{N}_c$, and the equation set has at most finitely many positive integer solutions.\par

\begin{proof}[Proof of Corollary \ref{COR1}]
Since $S$ is an incompressible, properly embedded surface in $V$, we can put $S$ in standard position by Proposition \ref{essential}. Moreover, since $S$ can be isotoped so that $S\subset \partial V \times [0,1]$, we have $|S\cap C|=0$.  Assume $S$ is in standard position, and not $\partial$-parallel. Then there must exists a movable saddle in $S$, which contradicts property 2 of Proposition \ref{no saddle}. Therefore, we have proved the first claim. The second claim is obvious from property 1 of Proposition \ref{no saddle}.
\end{proof}

\begin{remark} A compression body $M_C$ is a connected 3-manifold obtained by adding some 1-handles to $F\times I$ with all the gluing disks lying in $F\times 1$, where $F$ is an orientable closed surface (possibly non-connected), and filling in 3-balls to the resulting 3-manifold along all the 2-sphere components in $F\times 0$, if any. The surface $F\times 0$ with all 2-sphere components removed is denoted by $\partial_- M_C$, and $\partial M_C\setminus \partial_-M_C$ is denoted by $\partial_+M_C$. It is clear that $M_C$ is a handlebody if $\partial_-M_C=\emptyset$. As in the handlebody case, we may put the part $F\times I$ of $M_C$ in a position such that at a regular height, the intersection of the height plane and the part $F\times I$ in $M_C$ is either a disk, or an annulus, or two annuli, and by a similar argument, we may generalize Theorem \ref{THM1} to the case of compression bodies. But the statement of a similar theory on compression bodies is tediously long and repetitive, we omit the details of this case. \end{remark}

\section{Some Examples}\label{C6}

We demonstrate the application of our geometric algorithm mentioned in Theorem \ref{THM1} through some examples.\par

\begin{figure}[h!]
            \centering
            \includegraphics[width=13cm]{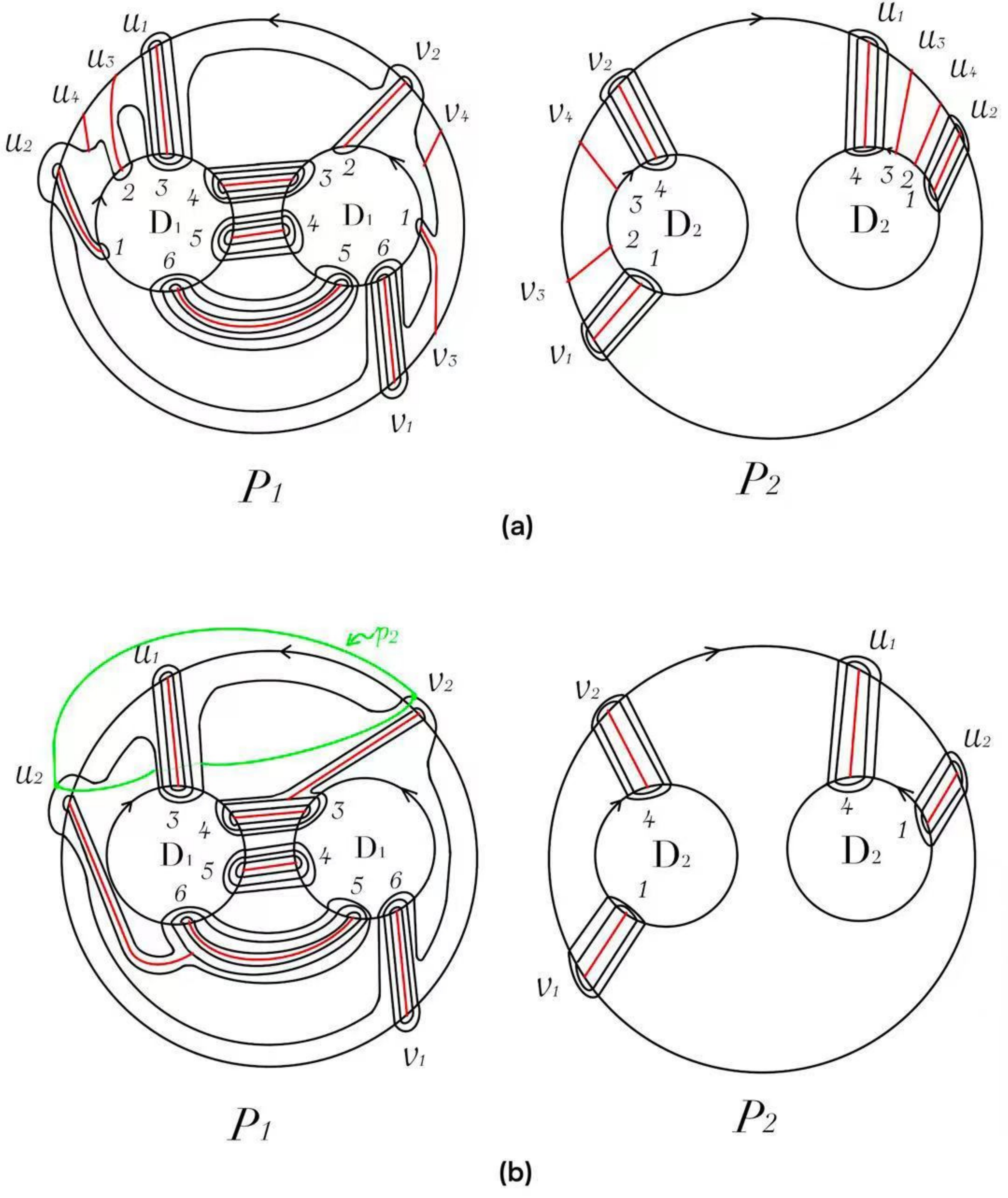}
            \caption{Placing $Q_4$ in standard position.}
            \label{f10}
\end{figure}

\begin{example}
In \cite{8}, Qiu has constructed a series of properly embedded incompressible surfaces $Q_{2n}$, $n\geq1$, of arbitrarily high genus in a genus two handlebody $H_2$. He has proved each $Q_{2n}$ is incompressible by checking that $\pi(Q_{2n})$ is $\pi_1$-injective in $\pi_1(H_2)$. We give a proof using our geometric algorithm to show that each $Q_{2n}$ is incompressible for an arbitrary $n$.
\end{example}

\begin{proof}
Let $D_0$ be a separating disk of $H_2$, and let $(D_1,D_2)$ be a set of meridian disks of $H_2$. Then we take $\mathcal{D}=\{D_0\}\cup \{D_1\}\cup \{D_2\}$, so that $\overline{H_2-\mathcal{D}}=P_1\cup P_2$. It is easy to see that \small{$|Q_{2n}\cap C|$} is at least 1. When $n=1$, $Q_{2n}$ is already in standard position, since any isotopy of $Q_{2n}$ would not decrease the complexity $\mathcal{L}$. Also when $n=1$, it is easy to check that $Q_{2n}$ is incompressible.\par

We are going to picture the surfaces in the style of ``Heegaard diagrams'', in which the red arcs represent unions of $4$-disks, and each closed loop which does not include the parts in the boundaries of $\mathcal{D}$, bounds a disk in $P_1$ or $P_2$. When $n=2$, the graph of $Q_{2n}$ is shown in Figure \ref{f10}(a), and the alignments of $u_i, v_i, 1\leq i \leq 4,$ are as given in \cite{8} . We can isotope $Q_4$  so that it is in standard position as shown in Figure \ref{f10}(b). In $P_1$ of Figure \ref{f10}(b) there exists only one bi-gon $p_2$ in $\mathcal{P}_2$. Therefore, by Lemma \ref{iff} $Q_{2n}$ is incompressible when $n=2$. For $N\geq2$, we can show by ``induction'' that: if $Q_{2N}$ is put in standard position, then we can put $Q_{2N+2}$ in standard position in a similar pattern. Compare $Q_{4}$ with the case of $Q_{6}$ in standard position shown as in Figure \ref{f11} will help us see this. Therefore, we can see for an arbitrary $n$, $Q_{2n}$ always admits only one bi-gon $p_2$ in $P_1$ and $P_2$. Therefore, by Lemma \ref{iff} we have proved $Q_{2n}$ is incompressible for an arbitrary $n$.
\end{proof}

\begin{figure}[h!]
            \centering
            \includegraphics[width=13cm]{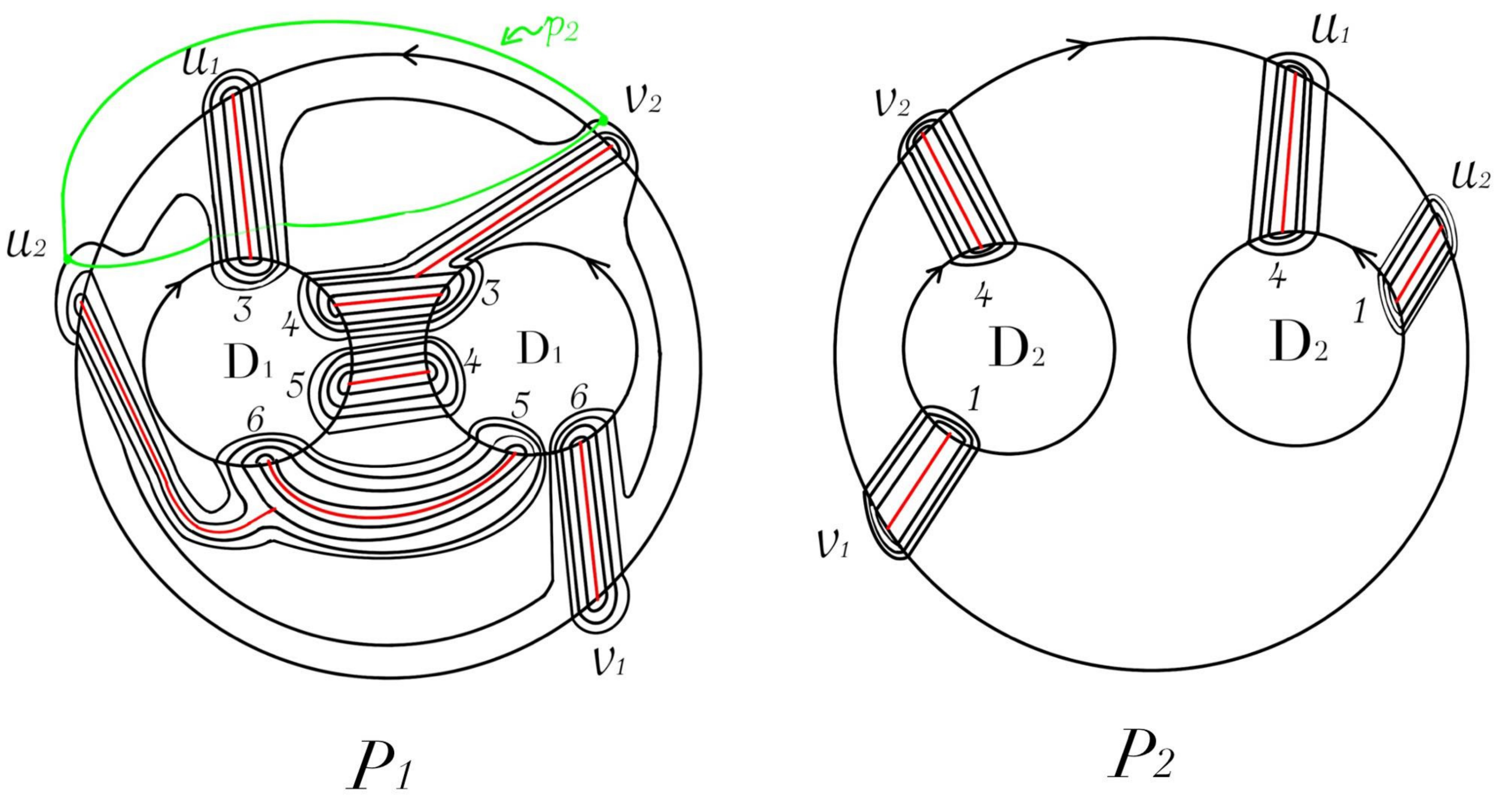}
            \caption{$Q_{6}$ in standard position.}
            \label{f11}
\end{figure}

\begin{figure}[h!]
\centering
  \includegraphics[width=12cm]{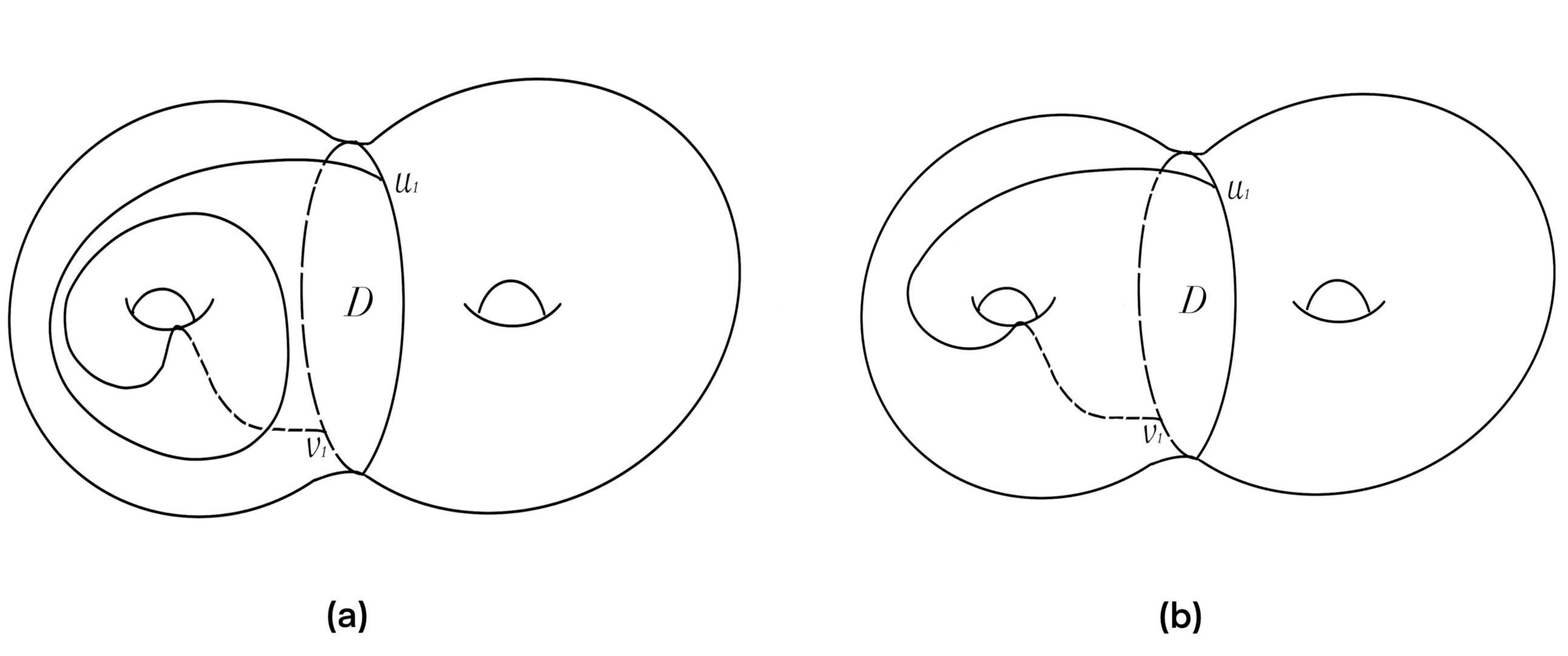}
  \caption{} \label{f12}
\end{figure}

\begin{example}
In \cite{8}, if we replace the arc of $u_1v_1$ by the arc shown in Figure \ref{f12}(a), we can still show that $Q_{2n}$ is incompressible for an arbitrary $n$. If we replace the arc of $u_1v_1$ by the arc shown in Figure \ref{f12}(b), we can show that for $n\geq2$, $Q_{2n}$ is compressible. The proof is similar to that of the preceding example.\par
\end{example}

\begin{example}
In \cite{7}, Jaco constructed a class of non-separating incompressible surfaces $J_n$, $n\geq 1$, of arbitrarily high genus properly embedded in $H_2$. We apply the formal construction of $J_n$ shown in \cite{4}. We take $\mathcal{D}=\{D_0, D_i |i=1,2\}$, and place $J_n$ in standard position. Then the submanifolds are symmetrical with respect to $D_0$ after we cut $H_2$ and $J_n$ with $\mathcal{D}$. $(H_2, J_n)$ can be considered as two ps-ball with subsurfaces of $J_n$ embedded in it, shown as in Figure \ref{f13}, glued together. There exists no bi-gon in each ps-ball for an arbitrary $n$. Therefore, by Lemma \ref{iff} we have proved an arbitrary $n$, $J_n$ is incompressible.

\end{example}

\begin{figure}[h!]
            \centering
            \includegraphics[width=5cm]{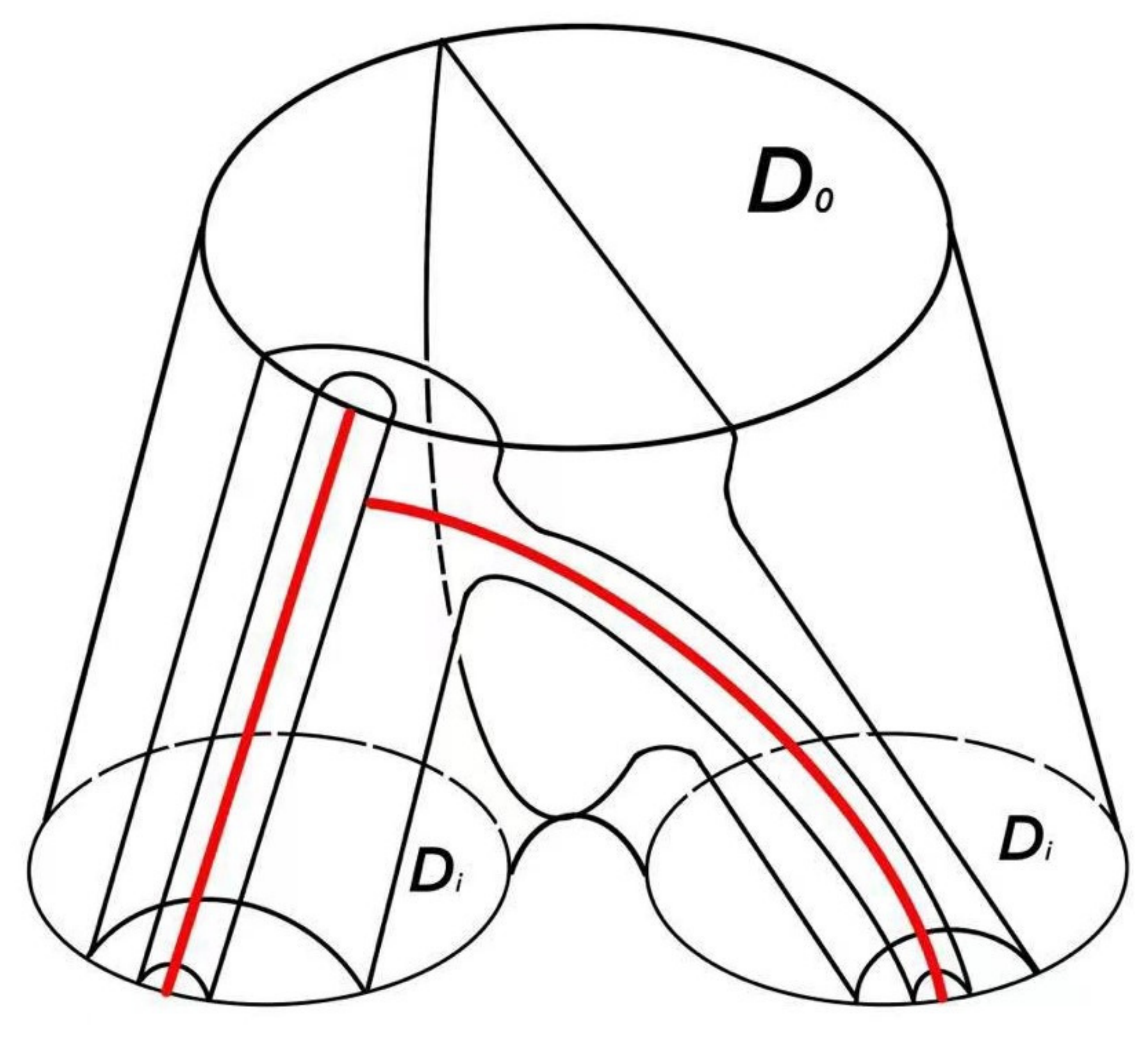}
            \caption{}
            \label{f13}
\end{figure}

%%%%%%%%%%%%%%%%%%%%%%%%%%%%%%%%%%%%%%%%%%%%%%%%%%%%%%%%%%%%%%%%%%%%%%

%%%%%%%%%%%%%%%%%%%%%%%%%%%%%%%%%%%%%%%%%%%%%%%%%%%%%%%
%%% Acknowledgements. ÖÂÐ»
%%%%%%%%%%%%%%%%%%%%%%%%%%%%%%%%%%%%%%%%%%%%%%%%%%%%%%%
{\bf Acknowledgement} The authors would like to express their deep gratitude to the referees for many helpful suggestions.

%%%%%%%%%%%%%%%%%%%%%%%%%%%%%%%%%%%%%%%%%%%%%%%%%%%%%%%
%%% Conflict of interest. ×÷ÕßÀûÒæÉùÃ÷
%%%%%%%%%%%%%%%%%%%%%%%%%%%%%%%%%%%%%%%%%%%%%%%%%%%%%%%
%\InterestConflict

%%%%%%%%%%%%%%%%%%%%%%%%%%%%%%%%%%%%%%%%%%%%%%%%%%%%%%%
%%% Supplements. ²¹³ä²ÄÁÏ, ·Ç±ØÑ¡
%%%%%%%%%%%%%%%%%%%%%%%%%%%%%%%%%%%%%%%%%%%%%%%%%%%%%%%
%\Supplements{}

%%%%%%%%%%%%%%%%%%%%%%%%%%%%%%%%%%%%%%%%%%%%%%%%%%%%%%%
%%% Reference section. ²Î¿ŒÎÄÏ×
%%% citation in the content using "some words~\cite{1,2}".
%%% ~ is needed to make the reference number is on the same line with the word before it.
%%%%%%%%%%%%%%%%%%%%%%%%%%%%%%%%%%%%%%%%%%%%%%%%%%%%%%%

%%%%%%%%%%%%%%%%%%%%%%%%%%%%%%%%%%%%%%%%%%%%%%%%%%%%%%%
%%% Appendix sections. žœÂŒÕÂœÚ, ·Ç±ØÑ¡
%%%%%%%%%%%%%%%%%%%%%%%%%%%%%%%%%%%%%%%%%%%%%%%%%%%%%%%

\end{document}